\numberwithin{equation}{section}
\newtheorem{Theorem}{Theorem}[section]
\newtheorem{Proposition}[Theorem]{Proposition}
\newcommand{\Zint}{\mathbb{Z}}
\newcommand{\C}{\mathbb{C}}
\begin{document}
%\allowdisplaybreaks

\newcommand{\arXivNumber}{1712.09564}

\renewcommand{\thefootnote}{}

\renewcommand{\PaperNumber}{061}

\FirstPageHeading

\ShortArticleName{On $q$-Deformations of the Heun Equation}

\ArticleName{On $\boldsymbol{q}$-Deformations of the Heun Equation\footnote{This paper is a~contribution to the Special Issue on Elliptic Hypergeometric Functions and Their Applications. The full collection is available at \href{https://www.emis.de/journals/SIGMA/EHF2017.html}{https://www.emis.de/journals/SIGMA/EHF2017.html}}}

\Author{Kouichi TAKEMURA}

\AuthorNameForHeading{K.~Takemura}

\Address{Department of Mathematics, Faculty of Science and Engineering, Chuo University,\\ 1-13-27 Kasuga, Bunkyo-ku Tokyo 112-8551, Japan}
\Email{\href{mailto:takemura@math.chuo-u.ac.jp}{takemura@math.chuo-u.ac.jp}}

\ArticleDates{Received January 18, 2018, in final form May 29, 2018; Published online June 18, 2018}

\Abstract{The $q$-Heun equation and its variants arise as degenerations of Ruijsenaars--van Diejen operators with one particle. We investigate local properties of these equations. In particular we characterize the variants of the $q$-Heun equation by using analysis of regular singularities. We also consider the quasi-exact solvability of the $q$-Heun equation and its variants. Namely we investigate finite-dimensional subspaces which are invariant under the action of the $q$-Heun operator or variants of the $q$-Heun operator.}

\Keywords{Heun equation; $q$-deformation; regular singularity; quasi-exact solvability; degeneration}

\Classification{39A13; 33E10}

\renewcommand{\thefootnote}{\arabic{footnote}}
\setcounter{footnote}{0}

\section{Introduction}

Accessory parameters of an ordinary linear differential equation are the parameters which are not governed by local data (or local exponents) of the differential equation, and absence or existence of accessory parameters affects the strategy of analyzing the differential equation. A~typical example which does not have an accessory parameter is the hypergeometric equation of Gauss. On the other hand, the Heun equation is an example which has an accessory parameter. Heun's differential equation is a standard form of the second order linear differential equation with four regular singularities on the Riemann sphere, and it is written as
\begin{gather}
\frac{{\rm d}^2y}{{\rm d}z^2} + \left( \frac{\gamma}{z}+\frac{\delta }{z-1}+\frac{\epsilon}{z-t}\right) \frac{{\rm d}y}{{\rm d}z} + \frac{\alpha \beta z -B}{z(z - 1)(z - t)} y= 0,\label{eq:Heun}
\end{gather}
with the condition $\gamma +\delta +\epsilon = \alpha +\beta +1$. The parameter $B$ is an accessory parameter, which is independent of the local exponents.

In \cite{TakR}, $q$-difference deformations of Heun's differential equation were obtained by degene\-ra\-tions of Ruijsenaars--van Diejen operators \cite{vD0,RuiN}, which were also obtained in connection with $q$-Painlev\'e equations \cite{JS,Y}. One of the $q$-deformed difference equations is written as
\begin{gather}
\big(x-q^{h_1 +1/2} t_1\big) \big(x- q^{h_2 +1/2} t_2\big) g(x/q) + q^{\alpha _1 +\alpha _2} \big(x - q^{l_1-1/2}t_1 \big) \big(x - q^{l_2 -1/2} t_2\big) g(qx) \nonumber\\
 \qquad {}-\big\{ \big(q^{\alpha _1} +q^{\alpha _2} \big) x^2 + E x + q^{(h_1 +h_2 + l_1 + l_2 +\alpha _1 +\alpha _2 )/2 } \big( q^{\beta /2}+ q^{-\beta/2}\big) t_1 t_2 \big\} g(x) =0.\label{eq:RuijD5}
\end{gather}
It is characterized as the linear homogeneous $q$-difference equation
\begin{gather}
p^{\langle 0 \rangle} (x) g(x/q) + p^{\langle 1 \rangle}(x) g(x) + p^{\langle 2 \rangle} (x) g(qx)=0 ,
\label{eq:pogp1gp2g}
\end{gather}
where the polynomials $p^{\langle i \rangle} (x) $, $i=0,1,2$, are quadratic $p^{\langle i \rangle} (x) = p^{\langle i \rangle}_0 + p^{\langle i \rangle}_1 x +p^{\langle i \rangle}_2 x^2$ with the condition $p^{\langle 0 \rangle}_0 \neq 0$, $p^{\langle 0 \rangle}_2 \neq 0$, $p^{\langle 2 \rangle}_0 \neq 0$ and $p^{\langle 2 \rangle}_2 \neq 0$. It was discovered by Hahn~\cite{Hahn} in~1971, and we call equation~(\ref{eq:pogp1gp2g}) (or equation~(\ref{eq:RuijD5})) the $q$-Heun equation. Note that Hahn recognized that the parameter $p^{\langle 1 \rangle}_1$ in equation~(\ref{eq:pogp1gp2g}) (or the parameter~$E$ in equation~(\ref{eq:RuijD5})) can be regarded as an accessory parameter.

The $q$-difference equation such that the polynomials $p^{\langle i \rangle} (x) $, $i=0,1,2$, are linear $p^{\langle i \rangle} (x) = p^{\langle i \rangle}_0 + p^{\langle i \rangle}_1 x $ with the condition $p^{\langle 0 \rangle}_0 \neq 0$, $p^{\langle 0 \rangle}_1 \neq 0$, $p^{\langle 2 \rangle}_0 \neq 0$ and $p^{\langle 2 \rangle}_1 \neq 0$ is better known as the $q$-difference hypergeometric equation ($q$-hypergeometric equation), and a standard form is given as
\begin{gather}
(x-q) f(x/q) - ((a+b)x -q-c)f(x)+ (abx-c)f(q x)=0. \label{eq:qHG}
\end{gather}
Then the basic hypergeometric series ($q$-hypergeometric series)
\begin{gather*}
{}_2 \phi _1 (a ,b ;c ;x) = \sum_{n=0}^{\infty} \frac{(a ;q)_n (b ;q)_n }{(q;q) _n (c ;q)_n } x^n, \qquad (\lambda ,q)_n= \prod_{i=0}^{n-1}\big(1- \lambda q^i\big)
\end{gather*}
is a solution to equation~(\ref{eq:qHG}). Note that the $q$-hypergeometric equation appears as a special case of the $q$-Heun equation. Namely, if the parameters in equation~(\ref{eq:RuijD5}) satisfy $l_2 =h_2 +1$ and $E= - \big(q^{\alpha _1} +q^{\alpha _2} \big) q^{h_2 +1/2} t_2 - q^{(h_1 -h_2 + l_1 + l_2 +\alpha _1 +\alpha _2 -1)/2 } \big( q^{\beta /2}+ q^{-\beta/2}\big) t_1$, then the $q$-difference equation has the common factor $\big(x-q^{h_2 +1/2} t_2\big) $ and we obtain the $q$-hypergeometric difference equation.

Recall that the $q$-Heun equation was introduced in~\cite{TakR} as an eigenfunction of the fourth degeneration of the Ruijsenaars--van Diejen operator of one variable
\begin{gather}
 A^{\langle 4 \rangle} = x^{-1} \big(x-q^{h_1 + 1/2} t_1\big) \big(x-q^{h_2 +1/2} t_2\big) T_{q^{-1}}\nonumber\\
\hphantom{A^{\langle 4 \rangle} =}{}
 + q^{\alpha _1 +\alpha _2} x^{-1} \big(x - q^{l_1 -1/2} t_1\big) \big(x - q^{l_2 -1/2} t_2\big) T_q \nonumber \\
\hphantom{A^{\langle 4 \rangle} =}{} -\big\{ \big(q^{\alpha _1} +q^{\alpha _2} \big) x + q^{(h_1 +h_2 + l_1 +l_2 +\alpha _1 +\alpha _2 )/2} \big( q^{\beta/2} + q^{-\beta/2} \big) t_1 t_2 x^{-1} \big\} \label{eq:qH}
\end{gather}
with eigenvalue $E$, where $T_{q^{-1}} g(x) =g(x/q)$ and $T_q g(x)=g(qx) $. Namely equation~(\ref{eq:RuijD5}) is written as
\begin{gather*}
A^{\langle 4 \rangle} g(x) =Eg(x) . %\label{eq:A4E0}
\end{gather*}
The derivation of the $q$-Heun equation in \cite{TakR} was motivated by the relationship between the elliptic $4$-parameter Heun differential operator and the 8-parameter elliptic difference operator (Ruijsenaars--van Diejen operator). The (non-degenerate) Ruijsenaars--van Diejen operator \cite{vD0,RuiN} of one variable is given by
\begin{gather*}
A ^{\langle 0 \rangle} f(z) = V (z) f(z -ia_{-} )+V (-z) f( z+ ia_{-} ) +U (z) f(z),%\label{eq:defeRvD}
\end{gather*}
where
\begin{gather*}
 V (z) = \frac{\prod\limits_{n=1}^8 R_{+}(z-h_n-ia_{-}/2)}{R_{+}(2z+ia_{+}/2)R_{+}(2z-ia_{-}+ia_{+}/2)} ,
\end{gather*}
$R_{+ } (z)= \prod\limits_{k=1}^{\infty}\big(1- e^{-(2k-1) \pi a_{+}} e^{2\pi i z}\big) \big(1-e^{-(2k-1) \pi a_{+}} e^{-2\pi i z} \big)$ is a modified version of the theta function, and $U (z)$ is an elliptic function whose definition is omitted here (see~\cite{TakR}). The operator~$A ^{\langle 0 \rangle} $ has 8 parameters $h_1, \dots ,h_8 $ apart from the parameter $a_+$ in the elliptic function and the difference interval $i a_-$. The multi-variable Ruijsenaars--van Diejen system can be regarded as a relativistic deformation of the quantum system called the Inozemtsev system \cite{vD0,RuiN}. By the non-relativistic limit $a_- \to 0$ of the one-variable Ruijsenaars--van Diejen operator, we obtain the Heun operator in term of the elliptic function written as
\begin{gather*}
 H= -\frac{{\rm d}^2}{{\rm d}x^2} + \sum_{i=0}^3 l_i(l_i+1)\wp (x+\omega_i) ,%\label{eq:Heunellip}
\end{gather*}
where $\wp (x)$ is the Weierstrass elliptic function with basic periods $(2\omega _1 ,2\omega _3)$ and $\{ \omega _0 (=0), \omega _1$, $\omega _2 (=-\omega _1-\omega _3 ), \omega _3 \}$ is a set of half periods (see~\cite{RuiN} for details). Here the 8 parameters in the relativistic system are reduced to the 4 parameters $l_0$, $l_1$, $l_2$, $l_3$ in the non-relativistic limit. It is known that the Heun equation given in equation~(\ref{eq:Heun}) is equivalent to the equation \smash{$H f(x)= E f(x)$}, $E \in \C$ (see~\cite{TakS} for details). Hence the Ruijsenaars--van Diejen system is a~difference analogue of the elliptic form of the Heun equation. A motivation of the paper~\cite{TakR} was a~construction of the difference analogue of the Heun equation without the elliptic expression, and this was realized by considering degenerations four times. After writing the first version of the manuscript of~\cite{TakR}, the author was informed of Hahn's paper~\cite{Hahn} by Professor Ohyama.

In \cite{TakR}, other degenerate operators were also obtained. The third and the second degenerate Ruijsenaars--van Diejen operators are written as
\begin{gather}
 A^{\langle 3 \rangle} = x^{-1} \prod_{n=1}^3 \big(x- q^{h_n+1/2} t_n\big) T_{q^{-1}} + x^{-1}
 \prod_{n=1}^3 (x- q^{l_n -1/2} t_n ) T_q -\big(q^{1/2} +q^{-1/2} \big) x^2\nonumber\\
 \hphantom{A^{\langle 3 \rangle} =}{} +\sum _{n=1}^3 \big( q^{h_n} + q^{l_n} \big)t_n x + q^{(l_1 +l_2 +l_3 +h_1 +h_2 +h_3)/2} \big( q^{\beta/2} + q^{-\beta/2} \big) t_1 t_2 t_3 x^{-1} , \label{eq:qthird} \\
 A^{\langle 2 \rangle} = x^{-2} \prod_{n=1}^4 \big(x- q^{h_n +1/2} t_n\big) T_{q^{-1}}+ x^{-2} \prod_{n=1}^4 \big(x- q^{l_n -1/2} t_n\big) T_{q} \nonumber\\
 \hphantom{A^{\langle 2 \rangle} =}{} -\big(q^{1/2} +q^{-1/2} \big) x^2 + \sum _{n=1}^4 \big(q^{h_n}+ q^{l_n} \big) t_n x\nonumber\\
\hphantom{A^{\langle 2 \rangle} =}{}
+ \prod_{n=1}^4 q^{(h_n +l_n)/2} t_n \left[ - \big(q^{1/2} +q^{-1/2} \big) x^{-2} + \sum _{n=1}^4 \left( \frac{1}{q^{h_n}t_n} + \frac{1}{q^{l_n}t_n} \right) x^{-1} \right] . \label{eq:qsecond}
\end{gather}
The variants of the $q$-Heun equation were introduced as $A^{\langle 3 \rangle} g(x) =Eg(x) $ and $A^{\langle 2 \rangle} g(x) =Eg(x) $, $E \in \C$.

In this paper, we investigate some properties of the $q$-Heun equation and its variants. One of the results is a characterization of variants of the $q$-Heun equation. Recall that the equation $A^{\langle 4 \rangle} g(x) =Eg(x) $ (i.e., equation~(\ref{eq:RuijD5})) is written as equation~(\ref{eq:pogp1gp2g}) whose coefficients are generic quadratic polynomials.

The equation $A^{\langle 3 \rangle} g(x) =Eg(x) $ (resp.\ $A^{\langle 2 \rangle} g(x) =Eg(x) $) is written as
\begin{gather}
a(x) g(x/q) + b(x) g(x) + c(x) g(qx) =0,\label{eq:axgbxgcxg}
\end{gather}
where $a(x)$, $b(x)$, $c(x)$ are polynomials such that $\deg_x a(x)= \deg_x c(x)=3 $, $a(0) \neq 0 \neq c(0)$ and $\deg _x b(x) \leq 3$ (resp.\ $\deg_x a(x)= \deg_x c(x)=4 $, $a(0) \neq 0 \neq c(0)$ and $\deg _x b(x) \leq 4$), although there are relations among the coefficients of the polynomials $a(x)$, $b(x)$, $c(x)$. In this paper we describe these relations using a theory of the regular singularity of the $q$-difference equation. The definition of the regular singularity is analogous to the differential equation (see Section~\ref{sec:regsing}). Let us consider equation~(\ref{eq:axgbxgcxg}) such that $a(x)$, $b(x)$ and $c(x)$ are polynomials, $\deg_x a(x)= \deg_x c(x)=3 $, $\deg _x b(x) \leq 3$ and $a(0)\neq 0 \neq c(0)$. Without loss of generality, we assume that the polynomials $a(x)$ and $c(x)$ are monic (see the argument around equation~(\ref{eq:axgbxgcxh0})). We factorize the polynomials as $a(x)= \big(x- q^{h_1+1/2} t_1\big) \big(x- q^{h_2+1/2} t_2 \big) \big(x- q^{h_3+1/2} t_3\big) $ and $c(x)= \big(x- q^{l_1 -1/2}t_1 \big)\big(x- q^{l_2 -1/2}t_2 \big)\big(x- q^{l_3 -1/2}t_3 \big)$. Write the polynomial $b(x) $ as
\begin{gather*}
 b(x) = b_3 x^3 +b_2 x^2 + b_1 x + b_0 .
\end{gather*}
In the equation $A^{\langle 3 \rangle} g(x) =Eg(x) $ in the form of equation~(\ref{eq:axgbxgcxg}), the constants $b_3$, $b_2$, $b_1$ and $b_0$ are specified. Here we are going to find a characterization of the values $b_3$, $b_2$ and $b_0$ by imposing conditions of local behavior of equation~(\ref{eq:axgbxgcxg}). Namely, if the exponents of equation~(\ref{eq:axgbxgcxg}) about $x=\infty $ are $-1/2$, $1/2$ and the singularity $x=\infty $ is apparent (non-logarithmic), then the values~$b_3$ and~$b_2$ are determined as
\begin{gather*}
 b_3= -\big(q^{1/2} +q^{-1/2} \big), \qquad b_2= \sum _{n=1}^3 \big( q^{h_n} + q^{l_n} \big) t_n. %\label{eq:b320A3Eintro}
\end{gather*}
The value $b_0$ is determined as $b_0 = q^{(l_1 +l_2 +l_3 +h_1 +h_2 +h_3)/2} \big( q^{\beta/2} + q^{-\beta/2} \big) t_1 t_2 t_3 $ by imposing a~condition of the local exponents about $x=0$. See Theorem~\ref{thm:A3} for the precise description. Note that the value $b_1 = E$ is independent of the local conditions about $x=0, \infty$, and it is reasonable to regard~$E$ as an accessory parameter. An analogous result for the equation $A^{\langle 2 \rangle} g(x) =Eg(x) $ is described in Theorem~\ref{thm:A2}.

By a suitable limit in the equation $A^{\langle 3 \rangle} g(x) =Eg(x) $, we obtain a differential equation which is equivalent to Heun's differential equation by a linear fractional transformation (see Section~\ref{eq:locqH}). Then the singularity $x= \infty $ of the equation $A^{\langle 3 \rangle} g(x) =Eg(x) $ corresponds to a regular (non-singular) point of the differential equation in the limit, and we may say that the equation $( A^{\langle 3 \rangle} -E) g(x)=0$ is a $q$-analogue of the Fuchsian equation with four regular singularities $\{ 0, t_1, t_2 ,t_3 \}$, $t_1, t_2, t_3 \in \C _{\neq 0}$. For details see Section~\ref{sec:A3}. Similarly, the equation $\big( A^{\langle 2 \rangle} -E\big) g(x)=0$ is a $q$-analogue of the Fuchsian equation with four regular singularities $\{ t_1, t_2 ,t_3 , t_4 \}$, $t_1, t_2, t_3 , t_4 \in \C _{\neq 0}$ (see Section~\ref{sec:A2}).

In this paper we also investigate special solutions to the $q$-Heun equation and its variants. For this purpose, we consider quasi-exact solvability \cite{Tur} of the equations. Namely, we investigate finite-dimensional invariant spaces which are invariant under the action of the degenerate Ruijsenaars--van Diejen operator $A^{\langle 4 \rangle} $, $A^{\langle 3 \rangle} $ or $A^{\langle 2 \rangle} $. Recall that quasi-exact solvability was applied to investigate the Inozemtsev system of type $BC_N$ \cite{FGGRZ2,TakQ}, which is a generalization of the Heun equation to multiple variables. For the Ruijsenaars--van Diejen system (the Ruijsenaars system of type~$BC_N$), Komori~\cite{Kom} obtained finite-dimensional subspaces spanned by theta functions which are invariant under the action of the Ruijsenaars--van Diejen system. The author believes that our invariant subspaces of the operator $A^{\langle 4 \rangle} $, $A^{\langle 3 \rangle} $ or $A^{\langle 2 \rangle} $ are degenerations of those of Komori, although they are obtained independently and straightforwardly. We hope to develop further our understandings of these invariant subspaces and to find a path for analysis of the (non-degenerate) Ruijsenaars--van Diejen system in the near future.

Here we present a simple example of an invariant subspace for the operator $A^{\langle 4 \rangle}$ in equation~(\ref{eq:qH}). Set $\lambda _1 = (h_1 +h_2 -l_1-l_2- \alpha _1- \alpha _2 -\beta )/2+1 $ and assume that $\lambda _1=-\alpha _1$. Then the operator $A^{\langle 4 \rangle} $ preserves the one-dimensional space spanned by the function $x^{\lambda _1} $, and we have
\begin{gather}
 A^{\langle 4 \rangle} x^{\lambda _1} = -\big\{ \big( q^{h_1 +1/2} t_1 +q^{h_2 +1/2} t_2 \big) q^{ \alpha _1} + \big( q^{l_1-1/2} t_1 +q^{l_2-1/2} t_2 \big) q^{\alpha _2 } \big\} x^{\lambda _1}. \label{eq:A4xla1}
\end{gather}
Therefore the function $x^{\lambda _1 } $ is an eigenfunction of the operator $A^{\langle 4 \rangle} $ with eigenvalue $-\big\{ \big( q^{h_1 +1/2} t_1$ $+q^{h_2 +1/2} t_2 \big) q^{ \alpha _1} + \big( q^{l_1-1/2} t_1 +q^{l_2-1/2} t_2 \big) q^{\alpha _2 } \big\}$ under the assumption $h_1 +h_2 -l_1-l_2+\alpha _1- \alpha _2$ $-\beta +2 = 0 $.

The reminder of the paper is organized as follows. In Section~\ref{sec:regsing}, we explain local properties of solutions to second-order $q$-difference equation about $x=0$ and $x=\infty $. In Section~\ref{eq:locqH}, we investigate local behaviors of solutions about $x=0$ and $x=\infty $ to the $q$-Heun equation and its variants.
We obtain theorems for characterization of $q$-difference equations to variants of the $q$-Heun equation. We also investigate the limit $q\to 1$ and observe relationships with Heun's differential equation. In Section~\ref{sec:inv}, we investigate invariant subspaces related with the $q$-Heun equation and its variants.

In this paper, we assume that $q$ is a positive real number which is not equal to $1$, and all exponents are real numbers.

\section{Regular singularity to the difference equation} \label{sec:regsing}
We investigate the linear difference equation
\begin{gather}
u(x) g(x/q) + v(x) g(x) + w(x) g(qx) =0 \label{eq:axgbxgcxg1}
\end{gather}
such that $u(x)$, $v(x)$, $w(x)$ are Laurent polynomials which are written as
\begin{gather}
 u(x) = \sum_{k=M}^{N} u_k x^k , \qquad v(x) = \sum_{k=M'}^{N'} v_k x^k , \qquad w(x) = \sum_{k=M''}^{N''} w_k x^k,\label{eq:uvw}
\end{gather}
where $ u_{M}$, $v_{M'}$, $w_{M''}$, $u_{N'}$, $v_{N'}$, $w_{N''}$ are all non-zero.

Then the point $x=0$ (resp.\ $x=\infty $) of equation~(\ref{eq:axgbxgcxg1}) is a regular singularity, iff $M =M'' \leq M'$ (resp.\ $N =N'' \geq N' $). The power series solutions about the regular singularity is by now well-understood (see \cite{Adams28} and related papers).

Assume that the point $x=0$ of equation~(\ref{eq:axgbxgcxg1}) is a regular singularity. Then $M=M'' \leq M'$. We investigate local solutions about $x=0$ of the form
\begin{gather*}
 g(x)= x^{\lambda } \sum _{\ell =0}^{\infty} c_{\ell } x^{\ell }, \qquad c_0 \neq 0.
\end{gather*}
If we substitute this into equation~(\ref{eq:axgbxgcxg1}) and set $k+\ell = n +M$, then we get
\begin{gather*}
\sum _{n=0}^{\infty} \sum_{\ell =0}^{n} \big( q^{-\lambda -\ell } u_{n+M-\ell } + v_{n+M-\ell } + q^{\lambda +\ell } w_{n+M-\ell } \big) c_{\ell } x^{n+M +\lambda} =0.
\end{gather*}
Hence
\begin{gather*}
 \sum_{\ell =0}^{n} \big( q^{-\lambda -\ell } u_{n+M-\ell } + v_{n+M-\ell } + q^{\lambda +\ell } w_{n+M-\ell } \big) c_{\ell } =0,
\end{gather*}
where $n$ is a non-negative integer. If $n=0$, then it follows from $c_0 \neq 0$ that
\begin{gather*}
 q^{-\lambda } u_M + v_M + q^{\lambda } w_M =0, \label{eq:expo}
\end{gather*}
which we call the characteristic equation about the regular singularity $x=0$. The exponents about the regular singularity $x=0$ are the values $\lambda $ which satisfy the characteristic equation.
If the quadratic equation $u_M +v_M t + w_M t^2 =0$ has two positive roots, then we have two real-valued exponents. Let~$\lambda '$ be an exponent. If $\lambda ' +n$ ($n \in {\mathbb Z} _{\geq 1} $) is not an exponent, then the coefficient $c_n$ is determined by
\begin{gather*}
\big( q^{-\lambda ' -n} u_M + v_M + q^{\lambda '+n} w_M \big) c_{n} = - \sum_{\ell =0}^{n-1} \big( q^{-\lambda '-\ell } u_{n+M-\ell } + v_{n+M-\ell } + q^{\lambda '+\ell } w_{n+M-\ell } \big) c_{\ell } .
\end{gather*}
If $\lambda ' +n$ is an exponent for some positive integer $n$, then we need the equation
\begin{gather}
 \sum_{\ell =0}^{n-1} \big( q^{-\lambda ' -\ell } u_{n+M-\ell } + v_{n+M-\ell } + q^{\lambda ' +\ell } w_{n+M-\ell } \big) c_{\ell } =0 \label{eq:appsing}
\end{gather}
in order to have series solutions. Otherwise we need logarithmic terms for the solutions.
If the difference of the exponents is a non-zero integer and equation~(\ref{eq:appsing}) is satisfied,
then the singularity $x=0$ is called apparent (or non-logarithmic).

Next, we assume that the point $x=\infty $ of equation~(\ref{eq:axgbxgcxg1}) is a regular singularity, i.e., $N=N'' \geq N'$ in equation~(\ref{eq:uvw}), and investigate local solutions about $x= \infty$ of the form
\begin{gather*}
 h(x)= (1/x)^{\lambda } \sum _{\ell =0}^{\infty} \tilde{c}_{\ell } (1/x)^{\ell }, \qquad \tilde{c}_0 \neq 0.
\end{gather*}
Then we have
\begin{gather*}
 \sum _{\ell =0}^{n} \big( q^{\lambda +\ell } u_{N+\ell -n} + v_{N+\ell -n} + q^{-\lambda -\ell } w_{N+\ell -n} \big) \tilde{c}_{\ell } =0,
\end{gather*}
where $n$ is a non-negative integer. By setting $n=0$, we obtain the characteristic equation at the regular singularity $x=\infty $ as follows
\begin{gather}
 q^{\lambda } u_N + v_N + q^{-\lambda } w_N =0.\label{eq:expoinf}
\end{gather}
The exponents about the regular singularity $x=\infty $ are the values $\lambda $ which satisfy the characteristic equation, i.e., equation~(\ref{eq:expoinf}). Assume that $\lambda '$ is an exponent at $x=\infty $ and $\lambda ' +n$ is also an exponent at $x=\infty $ for some positive integer $n$. Then the singularity is called apparent, iff
\begin{gather*}
 \sum _{\ell =0}^{n-1} \big( q^{\lambda +\ell }u_{N+\ell -n} + v_{N+\ell -n} + q^{-\lambda -\ell } w_{N+\ell -n} \big) \tilde{c}_{\ell } =0.
\end{gather*}

\section[Local behaviors of solutions to the $q$-Heun equation and its variants]{Local behaviors of solutions to the $\boldsymbol{q}$-Heun equation\\ and its variants} \label{eq:locqH}

We investigate local behaviors of solutions about $x=0$ and $x=\infty $ to the $q$-Heun equation and its variants.
We obtain theorems that the local behaviors of solutions determine the coefficients of the $q$-difference equation for the cases of the variants of the $q$-Heun equation.

\subsection{}
Let $A ^{\langle 4 \rangle} $ be the operator in equation~(\ref{eq:qH}). Then the $q$-Heun equation is written as
\begin{gather}
\big( A^{\langle 4 \rangle} -E\big) g(x)=0,\label{eq:A4E}
\end{gather}
where $E$ is a constant. The singularities $x=0$ and $x=\infty $ of the $q$-Heun equation are regular, and the constants~$M$ and~$N$ in Section~\ref{sec:regsing} are given by $M=-1$ and $N=1$. The operator~$A ^{\langle 4 \rangle} $ acts on $x^{\mu } $ as
\begin{gather*}
 A^{\langle 4 \rangle} x^{\mu } = d^{\langle 4 \rangle, +} (\mu ) x^{\mu +1 } + d^{\langle 4 \rangle, 0} (\mu ) x^{\mu } + d^{\langle 4 \rangle, -} (\mu ) x^{\mu -1} , %\label{eq:A4xmu}
\end{gather*}
where
\begin{gather}
 d^{\langle 4 \rangle, +} (\mu ) = q^{\alpha _1 +\alpha _2} q^{\mu } - \big(q^{\alpha _1} +q^{\alpha _2} \big) + q^{-\mu } ,\nonumber \\
 d^{\langle 4 \rangle, 0} (\mu ) = -\big( q^{h_1 +1/2} t_1 +q^{h_2 +1/2} t_2 \big) q^{-\mu } -\big( q^{l_1-1/2} t_1 +q^{l_2-1/2} t_2 \big) q^{\alpha _1 +\alpha _2 + \mu } , \label{eq:d4+} \\
 d^{\langle 4 \rangle, -} (\mu ) =\big\{ q^{h_1 + h_2 + 1} q^{-\mu }\! -q^{( h_1 +h_2 + l_1 + l_2 +\alpha _1 +\alpha _2)/2 } \big( q^{\beta /2}+ q^{-\beta/2}\big)+ q^{l_1 +l_2 +\alpha _1 +\alpha _2 -1 } q^{\mu } \big\} t_1 t_2 . \nonumber
\end{gather}
We investigate the singularity $x=0$. Set
\begin{gather}
 g(x)= x^{\lambda } \sum _{n=0}^{\infty} c_n x^n, \qquad c_0 \neq 0, \label{eq:gxx0}
\end{gather}
and substitute this into equation~(\ref{eq:A4E}).
Then we have
\begin{gather*}
c_0 d^{\langle 4 \rangle, -} (\lambda ) =0, \qquad c_1 d^{\langle 4 \rangle, -} (\lambda + 1 ) + c_0 \big(d^{\langle 4 \rangle, 0} (\lambda ) -E\big) =0, \\
c_n d^{\langle 4 \rangle, -} (\lambda +n ) + c_{n-1} \big(d^{\langle 4 \rangle, 0} (\lambda +n -1 ) -E\big) +c_{n-2} d^{\langle 4 \rangle, +} (\lambda +n -2 ) =0 , \qquad n\geq 2 .
\end{gather*}
The characteristic equation about $x=0$ is written as $d^{\langle 4 \rangle, -} (\lambda )=0 $, i.e.,
\begin{gather*}
q^{h_1 + h_2 + 1} q^{-\lambda } -q^{(h_1 +h_2 + l_1 + l_2 +\alpha _1 +\alpha _2 )/2 } \big( q^{\beta /2}+ q^{-\beta/2}\big)+ q^{l_1 +l_2 +\alpha _1 +\alpha _2 -1 } q^{\lambda } =0.
\end{gather*}
Hence the values
\begin{gather}
\lambda _1 = (h_1 +h_2 -l_1-l_2 -\alpha _1-\alpha _2 -\beta +2)/2, \nonumber\\
\lambda _2 = (h_1 +h_2 -l_1-l_2 -\alpha _1-\alpha _2 +\beta +2)/2 \label{eq:A4la1la2}
\end{gather}
are exponents about $x=0$. If $\lambda _2 - \lambda _1 = \beta $ is not a positive integer, then we have $d^{\langle 4 \rangle, -} (\lambda _1 +n ) \neq 0$ for $n \in \Zint _{>0}$ and the coefficients $c_n $ for $\lambda =\lambda _1 $ and $n\geq 1$ are determined recursively. If $\lambda _1 - \lambda _2 \not \in \Zint $, then we have solutions to equation~(\ref{eq:A4E}) written as
\begin{gather*}
 x^{\lambda _1 } \left\{ 1+ \sum _{n=1}^{\infty} c_n x^{n} \right\} , \qquad x^{\lambda _2 } \left\{ 1 + \sum _{n=1}^{\infty} c'_n x^{n} \right\} .
\end{gather*}

We investigate equation~(\ref{eq:A4E}) about $x=\infty $. Set
\begin{gather}
g(x)= (1/x)^{\lambda } \sum _{n=0}^{\infty} \tilde{c}_n (1/x)^n= \sum _{n=0}^{\infty} \tilde{c}_n x^{-\lambda -n}, \qquad \tilde{c}_0 \neq 0.\label{eq:gxxinfty}
\end{gather}
By substituting this into equation~(\ref{eq:A4E}), we have
\begin{gather*}
 \tilde{c}_0 d^{\langle 4 \rangle, +} (-\lambda ) =0, \qquad \tilde{c}_1 d^{\langle 4 \rangle, +} (-\lambda - 1 ) + \tilde{c}_0 \big(d^{\langle 4 \rangle, 0} (-\lambda ) -E\big) =0, \\
 \tilde{c}_n d^{\langle 4 \rangle, +} (-\lambda -n ) + \tilde{c}_{n-1} \big(d^{\langle 4 \rangle, 0} (-\lambda -n +1 ) -E\big) +\tilde{c}_{n-2} d^{\langle 4 \rangle, +} (-\lambda -n +2) =0 . \nonumber
\end{gather*}
The characteristic equation at $x=\infty $ is written as $d^{\langle 4 \rangle, +} (-\lambda )=0 $, i.e.,
\begin{gather*}
q^{\alpha _1 +\alpha _2} q^{-\lambda } - \big(q^{\alpha _1} +q^{\alpha _2} \big) + q^{\lambda } =0 .
\end{gather*}
Hence the values $ \alpha _1 $ and $ \alpha _2 $ are exponents about $x=\infty $. If $ \alpha _2 -\alpha _1 $ is not a positive integer, then we have $d^{\langle 4 \rangle, +} (-\alpha _1 -n ) \neq 0$ for $n \in \Zint _{>0}$ and the coefficients $\tilde{c}_n $, $n\geq 1$, are determined recursively. If $\alpha _1 -\alpha _2 \not \in \Zint$, then we have solutions to equation~(\ref{eq:A4E}) written as
\begin{gather*}
 x^{-\alpha _1 } \left\{ 1+ \sum _{n=1}^{\infty} \tilde{c}_n x^{-n} \right\} , \qquad x^{-\alpha _2 } \left\{ 1 + \sum _{n=1}^{\infty} \tilde{c}'_n x^{-n} \right\} .
\end{gather*}

Equation~(\ref{eq:A4E}) is written as
\begin{gather*}
a(x) g(x/q) + b(x) g(x) + c(x) g(qx) =0,%\label{eq:axgbxgcxgA4}
\end{gather*}
where
\begin{gather*}
 a(x) = \big(x- q^{h_1+1/2} t_1\big) \big(x- q^{h_2+1/2} t_2\big) , \qquad %\label{eq:acbA4E} \\
 b(x) = b_2 x^2 +E x + b_0 , \nonumber \\
 c(x) = q^{\alpha _1 +\alpha _2} \big(x- q^{l_1 -1/2} t_1 \big)\big(x- q^{l_2 -1/2}t_2 \big), \nonumber
\end{gather*}
and
\begin{gather}
 b_2= -\big(q^{\alpha _1} +q^{\alpha _2} \big), \qquad b_0 = - q^{(h_1 +h_2 +l_1 +l_2 +\alpha _1 + \alpha _2 )/2} \big( q^{\beta/2} + q^{-\beta/2} \big) t_1 t_2 . \label{eq:b20A4E}
\end{gather}
If the exponents of equation~(\ref{eq:A4E}) about $x=0$ (resp.\ $x=\infty $) are $\lambda _1 $ and $\lambda _2 $ in equation~(\ref{eq:A4la1la2}) (resp.\ $\alpha _1$ and $\alpha _2$), then $b_0$ (resp.\ $b_2$) is given by equation~(\ref{eq:b20A4E}). The parameter $E$ is independent of the exponents about $x=0, \infty $, and it is reasonable to regard $E$ as an accessory parameter.

It was shown in \cite{TakR} that the $q$-Heun equation reduces to Heun's differential equation in the $q\to 1$ limit.

\subsection{} \label{sec:A3}
Let $A ^{\langle 3 \rangle} $ be the operator in equation~(\ref{eq:qthird}). Then the equation
\begin{gather}
\big( A^{\langle 3 \rangle} -E\big) g(x)=0\label{eq:A3E}
\end{gather}
for a fixed constant $E$ is a variant of the $q$-Heun equation. The singularities $x=0$ and $x=\infty $ of equation~(\ref{eq:A3E}) are regular, and the constants $M$ and $N$ in Section~\ref{sec:regsing} are given by $M=-1$ and $N=2$. The operator $A ^{\langle 3 \rangle} $ acts on $x^{\mu } $ as
\begin{gather}
A^{\langle 3 \rangle} x^{\mu } = d^{\langle 3 \rangle, ++} (\mu ) x^{\mu +2 } + d^{\langle 3 \rangle, +} (\mu ) x^{\mu +1 } + d^{\langle 3 \rangle, 0} (\mu ) x^{\mu } + d^{\langle 3 \rangle, -} (\mu ) x^{\mu -1} , \label{eq:A3xmu}
\end{gather}
where
\begin{gather}
 d^{\langle 3 \rangle, ++} (\mu ) = q^{-\mu } + q^{\mu } -q^{1/2} -q^{-1/2} , \nonumber \\
 d^{\langle 3 \rangle, +} (\mu ) = \sum _{i=1}^3 \big( q^{h_i} +q^{l_i} - q^{h_i+1/2 -\mu } - q^{l_i-1/2 +\mu } \big) t_i , \nonumber \\
 d^{\langle 3 \rangle, 0} (\mu ) = \sum _{1\leq i<j \leq 3} \big( q^{h_i+h_j+1-\mu } + q^{l_i+l_j-1+\mu } \big) t_i t_j ,\nonumber \\
 d^{\langle 3 \rangle, -} (\mu ) = \big\{ {-}q^{h_1+h_2+h_3 +3/2 -\mu } - q^{l_1+l_2+l_3 -3/2 +\mu } \nonumber\\
 \hphantom{d^{\langle 3 \rangle, -} (\mu ) =}{} + q^{(l_1 +l_2 +l_3 +h_1 +h_2 +h_3)/2} \big( q^{\beta/2} + q^{-\beta/2} \big) \big\} t_1 t_2 t_3 .\label{eq:d3++}
\end{gather}
We investigate the singularity $x=0$ of equation~(\ref{eq:A3E}). We substitute equation~(\ref{eq:gxx0}) into equation~(\ref{eq:A3E}). Then we have
\begin{gather*}
d^{\langle 3 \rangle, -} (\lambda ) =0, \qquad c_1 d^{\langle 3 \rangle, -} (\lambda + 1 ) =- c_0 \big(d^{\langle 3 \rangle, 0} (\lambda ) -E\big) , \\
 c_2 d^{\langle 3 \rangle, -} (\lambda +2 ) =-\big\{ c_{1} \big(d^{\langle 3 \rangle, 0} (\lambda +1 ) -E\big) +c_{0} d^{\langle 3 \rangle, +} (\lambda ) \big\} , \\
c_n d^{\langle 3 \rangle, -} (\lambda +n ) =-\big\{ c_{n-1} \big(d^{\langle 3 \rangle, 0} (\lambda +n -1 ) -E\big) +c_{n-2} d^{\langle 3 \rangle, +} (\lambda +n -2) \\
\hphantom{c_n d^{\langle 3 \rangle, -} (\lambda +n ) =}{} +c_{n-3} d^{\langle 3 \rangle, ++} (\lambda +n -3) \big\} , \qquad n \geq 3.
\end{gather*}
The characteristic equation at $x=0$ is written as $d^{\langle 3 \rangle, -} (\lambda )=0 $, i.e.,
\begin{gather*}
-q^{h_1+h_2+h_3 +3/2 -\lambda } - q^{l_1+l_2+l_3 -3/2 +\lambda } + q^{(l_1 +l_2 +l_3 +h_1 +h_2 +h_3)/2} \big( q^{\beta/2} + q^{-\beta/2} \big) =0 .
\end{gather*}
Hence the values
\begin{gather*}
\lambda _1 = (h_1 +h_2 +h_3 -\beta -l_1-l_2-l_3 +3)/2 , \\
 \lambda _2 = (h_1 +h_2 +h_3 +\beta-l_1-l_2-l_3 +3)/2 %\label{eq:A3la1la2}
\end{gather*}
are exponents about $x=0$.

We investigate the singularity $x=\infty $ of equation~(\ref{eq:A3E}). We substitute equation~(\ref{eq:gxxinfty}) into equation~(\ref{eq:A3E}). Then we have
\begin{gather*}
 d^{\langle 3 \rangle, ++} (-\lambda ) =0, \qquad \tilde{c}_1 d^{\langle 3 \rangle, ++} (-\lambda - 1 ) =- \tilde{c}_0 d^{\langle 3 \rangle, +} (-\lambda ) , \\
\tilde{c}_2 d^{\langle 3 \rangle, ++} (-\lambda -2 ) = -\big\{ \tilde{c}_{1} d^{\langle 3 \rangle, +} (-\lambda -1 ) + \tilde{c}_{0} (d^{\langle 3 \rangle, 0} (-\lambda ) -E) \big\}, \nonumber \\
 \tilde{c}_n d^{\langle 3 \rangle, ++} (-\lambda -n ) =-\big\{ \tilde{c}_{n-1} d^{\langle 3 \rangle, +} (-\lambda -n+1 )+ \tilde{c}_{n-2} \big(d^{\langle 3 \rangle, 0} (-\lambda -n +2 ) -E\big) \nonumber \\
\hphantom{\tilde{c}_n d^{\langle 3 \rangle, ++} (-\lambda -n ) =}{} +\tilde{c}_{n-3} d^{\langle 3 \rangle, +} (-\lambda -n +3)\big\} , \qquad n \geq 3 . \nonumber
\end{gather*}
The singularity $x=\infty $ is regular and the characteristic equation at $x=\infty $ is written as $d^{\langle 3 \rangle, ++} (-\lambda )=0 $, i.e.,
\begin{gather*}
q^{\lambda } + q^{-\lambda } -q^{1/2} -q^{-1/2} =0 .
\end{gather*}
Hence the values $\lambda _1 = -1/2 $ and $\lambda _2 = 1/2 $ are exponents about $x=\infty $, and the difference of the exponents is $1$. In the case $\lambda _1 = -1/2 $, we have $d^{\langle 3 \rangle, ++} (-\lambda _1 - 1) =0$ and it follows from the relation $ d^{\langle 3 \rangle, +} (-\lambda _1 ) =0 $ (see equation~(\ref{eq:d3++})) that the regular singularity $x=\infty $ is apparent. We have two independent solutions written as
\begin{gather*}
x^{1/2} \left\{ 1+ \sum _{n=2}^{\infty} \tilde{c}_n x^{-n} \right\} , \qquad x^{-1/2} \left\{ 1 + \sum _{n=1}^{\infty} {\tilde{c}}'_n x^{-n} \right\} .
\end{gather*}

Recall that equation~(\ref{eq:A3E}) is written as
\begin{gather}
a(x) g(x/q) + b(x) g(x) + c(x) g(qx) =0, \label{eq:axgbxgcxg0}
\end{gather}
where
\begin{gather}
a(x)= \big(x- q^{h_1+1/2} t_1\big) \big(x- q^{h_2+1/2} t_2\big) \big(x- q^{h_3+1/2} t_3\big) , \nonumber \\
b(x) = b_3 x^3 +b_2 x^2 +E x + b_0 , \nonumber \\
c(x)= \big(x- q^{l_1 -1/2} t_1 \big)\big(x- q^{l_2 -1/2}t_2 \big)\big(x- q^{l_3 -1/2}t_3 \big),\label{eq:acbA3E}
\end{gather}
and
\begin{gather}
b_3= -\big(q^{1/2} +q^{-1/2} \big), \qquad b_2= \sum _{n=1}^3 \big( q^{h_n} + q^{l_n} \big) t_n, \nonumber\\
 b_0 = q^{(l_1 +l_2 +l_3 +h_1 +h_2 +h_3)/2} \big( q^{\beta/2} + q^{-\beta/2} \big) t_1 t_2 t_3 . \label{eq:b320A3E}
\end{gather}
Note that the parameter $E$ is independent of the exponents about $x=0, \infty $ and apparency of the regular singularity $x=\infty $. Therefore it is reasonable to regard $E$ as an accessory parameter.

Next we consider a characterization of a variant of the $q$-Heun equation in equation~(\ref{eq:A3E}) by restricting the polynomials $a(x)$, $b(x)$ and $c(x)$ in the difference equation (\ref{eq:axgbxgcxg0}) which satisfy $\deg _x a(x)= \deg _x c(x)=3$, $\deg _x b(x) \leq 3 $ and $a(0 ) \neq 0 \neq c(0)$. If the function $g(x)$ satisfies equation~(\ref{eq:axgbxgcxg0}) and $g(x)= x^{\nu } h(x)$, then the function $h(x)$ satisfies
\begin{gather}
q^{-\nu} a(x) h(x/q) + b(x) h(x) + q^{\nu } c(x) h(qx) =0.\label{eq:axgbxgcxh0}
\end{gather}
Therefore we may assume that the polynomials $a(x)$ and $c(x)$ are monic by a gauge transformation. We denote the zeros of the polynomials $a(x)$ and $c(x)$ by equation~(\ref{eq:acbA3E}). Then the coefficients $b_3$, $b_2$ and $b_0$ are determined by imposing conditions of local behaviors about $x=0$ and $x=\infty $. Namely we have the following theorem.
\begin{Theorem} \label{thm:A3} Assume that the functions $a(x)$, $b(x)$ and $c(x)$ take the form of equation~\eqref{eq:acbA3E}. If the difference of the exponents about $x=0$ is $\beta (\neq 0)$, the difference of the exponents about $x=\infty $ is~$1$, and the regular singularity $x=\infty $ is apparent, then we obtain equation~\eqref{eq:b320A3E} and recover a variant of the $q$-Heun equation in equation~\eqref{eq:A3E}.
\end{Theorem}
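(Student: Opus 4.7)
The plan is to determine $b_3$, $b_0$, and $b_2$ one at a time from the three hypotheses, in the order: exponents at $\infty$, exponents at $0$, apparency at $\infty$. In the notation of Section~\ref{sec:regsing}, identify $u=a$, $v=b$, $w=c$, so that $M=0$ and $N=3$; the relevant coefficients are $u_3=w_3=1$, $v_3=b_3$, $v_0=b_0$, $v_2=b_2$, together with the expressions for $u_0, w_0, u_2, w_2$ read directly off the factorizations of $a(x)$ and $c(x)$ in~\eqref{eq:acbA3E}.

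For $b_3$, the characteristic equation~\eqref{eq:expoinf} at $x=\infty$ is a quadratic in $q^{\lambda}$ with product of roots equal to~$1$; hence the two exponents sum to~$0$, and together with the hypothesized difference~$1$ they must be $\pm 1/2$. The sum-of-roots relation then gives $b_3=-(q^{1/2}+q^{-1/2})$. For $b_0$, the characteristic equation~\eqref{eq:expo} at $x=0$ is a quadratic in $q^{\lambda}$ with product of roots $a(0)/c(0)=q^{h_1+h_2+h_3-l_1-l_2-l_3+3}$; using the hypothesized exponent difference $\beta$ one obtains $q^{\lambda_1}+q^{\lambda_2}=q^{(h_1+h_2+h_3-l_1-l_2-l_3+3)/2}(q^{\beta/2}+q^{-\beta/2})$, and the sum-of-roots relation combined with the explicit value of $c(0)$ yields the value of $b_0$ in~\eqref{eq:b320A3E}.

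For $b_2$ I would apply the apparency criterion at $x=\infty$ recalled at the end of Section~\ref{sec:regsing}. With exponents $\pm 1/2$ differing by~$1$, specializing the defining relation to $\lambda'=-1/2$ and $n=1$ collapses to the single scalar equation $q^{-1/2}u_2+b_2+q^{1/2}w_2=0$, into which substituting $u_2$ and $w_2$ immediately yields $b_2=\sum_{n=1}^3(q^{h_n}+q^{l_n})t_n$. With $b_3$, $b_2$, $b_0$ so determined and $b_1=E$ free, the polynomials $a(x), b(x), c(x)$ agree term by term with the coefficients produced by $\big(A^{\langle 3\rangle}-E\big)g(x)=0$ via the expansion~\eqref{eq:A3xmu}--\eqref{eq:d3++}, recovering~\eqref{eq:A3E}.

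The computation is essentially routine, and the only subtle point is the apparency step: one must notice that at the smaller exponent $-1/2$ the coefficient multiplying $\tilde{c}_1$ in the recursion vanishes \emph{exactly} by the first-step value of $b_3$, so that the otherwise-underdetermined condition on $\tilde{c}_1$ collapses to a single scalar relation for $b_2$. That none of the three hypotheses constrains $b_1$ confirms the interpretation of $E$ as an accessory parameter.
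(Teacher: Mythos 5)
Your proposal is correct and follows essentially the same route as the paper: the exponent conditions at $x=0$ and $x=\infty$ pin down $b_0$ and $b_3$ via the characteristic equations (you phrase this through Vieta's formulas for the quadratic in $q^{\lambda}$, the paper by subtracting the two instances of the characteristic equation, which is the same computation), and the apparency condition at $x=\infty$ with $\lambda'=-1/2$, $n=1$ yields exactly the scalar relation $q^{-1/2}u_2+b_2+q^{1/2}w_2=0$ that the paper uses to fix $b_2$. Your closing observation that the coefficient of $\tilde{c}_1$ vanishes precisely because $1/2$ is again an exponent, forcing the apparency condition to become the determining equation for $b_2$, is the right justification for that step.
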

\begin{proof} Let $\lambda $ and $\lambda +\beta $ be the exponents about $x=0$. Then we have
\begin{gather*}
- q^{h_1+ h_2+ h_3+3/2 -\lambda } t_1 t_2 t_3 - q^{l_1+ l_2+ l_3 - 3/2 +\lambda } t_1 t_2 t_3 +b_0=0 , \\
- q^{h_1+ h_2+ h_3+3/2 -\lambda -\beta } t_1 t_2 t_3 - q^{l_1+ l_2+ l_3 - 3/2 +\lambda +\beta } t_1 t_2 t_3 +b_0=0.
\end{gather*}
Hence we have $\big(q^{h_1+ h_2+ h_3+3/2 -\lambda } - q^{l_1+ l_2+ l_3 - 3/2 +\lambda +\beta }\big)\big(1-q^{-\beta} \big)=0$, $\lambda = (l_1 +l_2 +l_3 +h_1 +h_2 +h_3 + 3 -\beta )/2 $ and $ b_0 = q^{(l_1 +l_2 +l_3 +h_1 +h_2 +h_3)/2} \big( q^{\beta/2} + q^{-\beta/2} \big) t_1 t_2 t_3 $. Similarly it follows from the condition of the exponents about $x=\infty $ that $b_3= -\big(q^{1/2} +q^{-1/2} \big)$ and the exponents are $1/2$ and $-1/2$. The condition that the singularity $x=\infty $ is apparent is written as $-q^{-1/2}\big(q^{h_1+1/2} t_1 + q^{h_2+1/2}t_2 + q^{h_3+1/2}t_3 \big) + b_2 -q^{1/2}\big(q^{l_1-1/2}t_1 + q^{l_2-1/2}t_2 + q^{l_3-1/2}t_3 \big) =0 $. Therefore we obtain equation~(\ref{eq:b320A3E}).
\end{proof}

We are going to obtain a differential equation from the difference equation~(\ref{eq:A3E}) by taking a~suitable limit. We rewrite equation~(\ref{eq:A3E}) as
\begin{gather}
 \big(x-t_1 q^{h_1 +1/2} \big) \big(x-t_2 q^{h_2 +1/2} \big) \big(x-t_3 q^{h_3 +1/2} \big) g(x/q) \nonumber\\
\qquad{} + \big(x - t_1 q^{l_1 -1/2} \big) \big(x - t_2 q^{l_2 -1/2} \big) \big(x - t_3 q^{l_3 -1/2} \big) g(xq) \nonumber \\
\qquad{} + \bigg[ {-}\big(q^{1/2} +q^{-1/2} \big) x^3 +\sum _{n=1}^3 t_n \big( q^{h_n} + q^{l_n} \big) x^2 - \big\{ 2 (t_1 t_2 +t_2 t_3 +t_3 t_1 ) +(q-1)E_1 \nonumber \\
 \qquad{} +(q-1)^2 \tilde{E} \big\} x + t_1 t_2 t_3 q^{(l_1 +l_2 +l_3 +h_1 +h_2 +h_3)/2} \big( q^{\beta/2} + q^{-\beta/2} \big) \bigg] g(x) =0,\label{eq:qHeunA3}
\end{gather}
where
\begin{gather*}
E_1= (h_1 +h_2 +l_1 +l_2 )t_1 t_2 +(h_2 +h_3 +l_2 +l_3 )t_2 t_3 +(h_3 +h_1 +l_3 +l_1 )t_3 t_1 .
\end{gather*}
Set $q=1+ \varepsilon $. We divide equation~(\ref{eq:qHeunA3}) by $\varepsilon ^2$. By Taylor's expansion
\begin{gather*}
 g(x/q) =g(x) + \big({-}\varepsilon +\varepsilon ^2 \big)xg'(x) + \varepsilon ^2 x^2 g''(x) /2 +O\big( \varepsilon ^3\big),\\
 g(qx) =g(x) + \varepsilon x g'(x) + \varepsilon ^2 x^2 g''(x) /2 +O\big( \varepsilon ^3\big),
\end{gather*}
we find the following limit as $\varepsilon \to 0$:
\begin{gather}
 x^2(x-t_1)(x-t_2) (x-t_3 ) g''(x) \nonumber\\
\qquad{} + x^2(x-t_1)(x-t_2) (x-t_3 ) \left[ \frac{h_1 - l_1 +1 }{x- t_1} + \frac{h_2 - l_2 +1 }{x- t_2} +\frac{h_3 - l_3 +1 }{x- t_3} - \frac{2\tilde{l}}{x} \right] g'(x) \nonumber \\
\qquad{} - \big[ x^3 /4 + \{ (2 h_1 -2 l_1 +1)t_1+(2 h_2 -2 l_2 +1)t_2 +(2 h_3 -2 l_3 +1) t_3 \} x^2/4 \nonumber \\
\qquad{} + \tilde{B} x + t_1 t_2 t_3\big(\tilde{l} +1/2 +\beta /2\big) \big(\tilde{l} + 1/2 -\beta /2\big) \big]g(x) =0, \label{eq:Fuchs4sing}
\end{gather}
where
\begin{gather*}
 \tilde{l} = (h_1 + h_2 +h_3 -l_1 -l_2 -l_3 +2)/2 ,\qquad \tilde{B} = \tilde{E} + \mbox{const} .
\end{gather*}
This is a Fuchsian differential equation with five singularities $\{ 0,t_1, t_2, t_3, \infty \}$ and the local exponents are given by the following Riemann scheme
\begin{gather*}
\begin{pmatrix}
x=0 & x=t_1 & x=t_2 & x=t_3 & x=\infty \\
\tilde{l} +\beta /2 +1/2 & 0 & 0 & 0 & - 1/2 \\
\tilde{l} - \beta /2 +1/2 & l_1-h_1 & l_2-h_2 & l_3-h_3 & 1/2
\end{pmatrix} .
\end{gather*}
By the linear fractional transformation $z=t_2 (1-t_1/x)/(t_2-t_1)$ and the gauge transformation $g(x)=x^{1/2 }\tilde{g}(z)$, the function $y=\tilde{g}(z) $ satisfies Heun's differential equation
\begin{gather*}
\frac{{\rm d}^2y}{{\rm d}z^2} + \left( \frac{\gamma}{z}+\frac{\delta }{z-1}+\frac{\epsilon}{z-t}\right) \frac{{\rm d}y}{{\rm d}z} + \frac{\alpha ' \beta ' z -q}{z(z - 1)(z - t)} y= 0,%\label{eq:Heun1}
\end{gather*}
where $t=t_2(t_1-t_3)/\{ t_3(t_1-t_2) \}$, $\gamma = 1 +h_1 -l_1$, $\delta = 1 +h_2 -l_2$, $\epsilon = 1+h_3-l_3$, $\{ \alpha ' ,\beta ' \} = \{ \tilde{l} -\beta /2, \tilde{l} +\beta /2 \}$, $q= \tilde{E}/(t_3(t_2 -t_1)) + \mbox{const}$. Hence the point $x=\infty $ in equation~(\ref{eq:Fuchs4sing}) is essentially non-singular (apparent), and we may say that the equation $( A^{\langle 3 \rangle} -E) g(x)=0$ is a~$q$-analogue of the Fuchsian equation with four regular singularities $\{ 0, t_1, t_2 ,t_3 \}$, $t_1, t_2, t_3 \in \C _{\neq 0}$.

\subsection{} \label{sec:A2}
Let $A ^{\langle 2 \rangle} $ be the operator in equation~(\ref{eq:qsecond}). Then the equation
\begin{gather}
\big( A^{\langle 2 \rangle} -E\big) g(x)=0.\label{eq:A2E}
\end{gather}
for a fixed constant $E$ is a variant of the $q$-Heun equation. The singularities $x=0$ and $x=\infty $ of equation~(\ref{eq:A2E}) are regular, and the constants $M$ and $N$ in Section~\ref{sec:regsing} are given by $M=-2$ and $N=2$. The operator $A ^{\langle 2 \rangle} $ acts on $x^{\mu } $ as
\begin{gather}
 A^{\langle 2 \rangle} x^{\mu } = d^{\langle 2 \rangle, ++} (\mu ) x^{\mu +2 } + d^{\langle 2 \rangle, +} (\mu ) x^{\mu +1 } + d^{\langle 2 \rangle, 0} (\mu ) x^{\mu }\nonumber\\
 \hphantom{A^{\langle 2 \rangle} x^{\mu } =}{} + d^{\langle 2 \rangle, -} (\mu ) x^{\mu -1} + d^{\langle 2 \rangle, --} (\mu ) x^{\mu -2} , \label{eq:A2xmu}
\end{gather}
where
\begin{gather*}
 d^{\langle 2 \rangle, ++} (\mu ) = q^{-\mu } + q^{\mu } -q^{1/2} -q^{-1/2} ,\\ % \label{eq:d2++} \\
 d^{\langle 2 \rangle, +} (\mu ) = \sum _{i=1}^4 \big( q^{h_i} +q^{l_i} - q^{h_i+1/2 -\mu } - q^{l_i-1/2 +\mu } \big) t_i , \nonumber \\
 d^{\langle 2 \rangle, 0} (\mu ) = \sum _{1\leq i <j \leq 4} \big( q^{h_i + h_j +1-\mu } + q^{l_i + l_j -1+\mu } \big) t_i t_j , \nonumber \\
 d^{\langle 2 \rangle, -} (\mu ) = t_1 t_2 t_3 t_4 \sum _{i=1}^4 \big\{ q^{(h_1 +h_2 +h_3 +h_4 +l_1 +l_2 +l_3 +l_4)/2} \big( q^{-h_i}+ q^{-l_i}\big) \nonumber \\
 \hphantom{d^{\langle 2 \rangle, -} (\mu ) =}{} - q^{h_1 + h_2 +h_3 + h_4 + 3/2 -\mu } q^{-h_i } - q^{l_1 + l_2 +l_3 + l_4 - 3/2 +\mu } q^{-l_i } \big\} t_i^{-1} , \nonumber \\
 d^{\langle 2 \rangle, --} (\mu ) = \big\{ q^{h_1+h_2+h_3 +h_4+2 -\mu }+q^{l_1+l_2+l_3 +l_4-2 +\mu } \nonumber \\
 \hphantom{d^{\langle 2 \rangle, --} (\mu ) =}{} - q^{(l_1 +l_2 +l_3 +l_4+h_1 +h_2 +h_3+l_4)/2} \big( q^{1/2} + q^{-1/2} \big) \big\} t_1t_2t_3t_4 . \nonumber
\end{gather*}
We investigate the singularity $x=0$ of equation~(\ref{eq:A2E}). We substitute equation~(\ref{eq:gxx0}) into equation~(\ref{eq:A2E}). Then we have
\begin{gather*}
 d^{\langle 2 \rangle, --} (\lambda ) =0, \qquad c_1 d^{\langle 2 \rangle, --} (\lambda + 1 ) =- c_0 d^{\langle 2 \rangle, -} (\lambda ) , \\
 c_n d^{\langle 3 \rangle, --} (\lambda +n ) =-\big\{ c_{n-1} d^{\langle 3 \rangle, -} (\lambda +n -1 ) +c_{n-2} \big(d^{\langle 3 \rangle, 0} (\lambda +n -2 ) -E\big) \nonumber \\
\hphantom{c_n d^{\langle 3 \rangle, --} (\lambda +n ) =}{} +c_{n-3} d^{\langle 3 \rangle, +} (\lambda +n -3) +c_{n-4} d^{\langle 3 \rangle, ++} (\lambda +n -4) \big\} , \qquad n \geq 2 , \nonumber
\end{gather*}
where we set $c_{-1}=c_{-2}=0 $. The singularity $x=0$ is regular and the characteristic equation at $x=0$ is written as $d^{\langle 2 \rangle, --} (\lambda )=0 $, i.e.,
\begin{gather*}
q^{h_1+h_2+h_3 +h_4+2 -\lambda}+q^{l_1+l_2+l_3 +l_4-2 +\lambda}- q^{(l_1 +l_2 +l_3 +l_4+h_1 +h_2 +h_3+l_4)/2} \big( q^{1/2} + q^{-1/2} \big) =0.
\end{gather*}
Hence the values
\begin{gather*}
\lambda _1 = (h_1 +h_2 +h_3 +h_4 -l_1-l_2-l_3 -l_4 +3)/2 , \qquad \lambda _2 = \lambda _1 +1%\label{eq:A2la1la2}
\end{gather*}
are exponents about $x=0$, i.e., $ d^{\langle 2 \rangle, --} (\lambda _1 ) = d^{\langle 2 \rangle, --} (\lambda _2 )=0$. It follows from $d^{\langle 2 \rangle, -} (\lambda _1 ) =0$ that the singularity $x=0$ is apparent, and we have two independent solutions written as
\begin{gather*}
 x^{\lambda _1 } \left\{ 1+ \sum _{n=2}^{\infty} c_n x^{n} \right\} , \qquad x^{\lambda _1 +1} \left\{ 1 + \sum _{n=1}^{\infty} c'_n x^{n} \right\} .
\end{gather*}

We investigate the singularity $x=\infty $ of equation~(\ref{eq:A2E}). We substitute equation~(\ref{eq:gxxinfty}) into equation~(\ref{eq:A2E}). Then we have
\begin{gather*}
 d^{\langle 2 \rangle, ++} (-\lambda ) =0, \qquad \tilde{c}_1 d^{\langle 2 \rangle, ++} (-\lambda - 1 ) =- \tilde{c}_0 d^{\langle 2 \rangle, +} (-\lambda ) , \\
 \tilde{c}_n d^{\langle 2 \rangle, ++} (-\lambda -n ) =-\big\{ \tilde{c}_{n-1} d^{\langle 2 \rangle, +} (-\lambda -n+1 ) + \tilde{c}_{n-2} \big(d^{\langle 2 \rangle, 0} (-\lambda -n +2 ) -E\big) \nonumber \\
\hphantom{\tilde{c}_n d^{\langle 2 \rangle, ++} (-\lambda -n ) =}{} +\tilde{c}_{n-3} d^{\langle 2 \rangle, +} (-\lambda -n +3) +\tilde{c}_{n-4} d^{\langle 2 \rangle, ++} (-\lambda -n +4) \big\}, \qquad n \geq 2 , \nonumber
\end{gather*}
where we set $\tilde{c}_{-1}=\tilde{c}_{-2}=0$. The singularity $x=\infty $ is regular and the characteristic equation at $x=\infty $ is written as $d^{\langle 2 \rangle, ++} (-\lambda )=0 $, i.e.,
\begin{gather*}
q^{\lambda } + q^{-\lambda } -q^{1/2} -q^{-1/2} =0 .
\end{gather*}
Hence the values $\lambda _1 = -1/2 $ and $\lambda _2 = 1/2 $ are exponents about $x=\infty $. It is also shown that the singularity $x=\infty $ is apparent.

We have observed that the regular singularities $x=0$ and $x=\infty $ of equation~(\ref{eq:A2E}) are apparent and the differences of the exponents are $1$. Recall that equation~(\ref{eq:A2E}) is written as
\begin{gather*}
a(x) g(x/q) + b(x) g(x) + c(x) g(qx) =0,%\label{eq:axgbxgcxgA2}
\end{gather*}
where
\begin{gather}
a(x)= \big(x- q^{h_1+1/2} t_1\big) \big(x- q^{h_2+1/2} t_2\big) \big(x- q^{h_3+1/2} t_3\big) \big(x- q^{h_4+1/2} t_4\big) , \nonumber\\
b(x) = b_4 x^4 +b_3 x^3 +E x^2+ b_1 x + b_0 , \nonumber \\
c(x)= \big(x- q^{l_1 -1/2}t_1\big)\big(x- q^{l_2 -1/2}t_2\big)\big(x- q^{l_3 -1/2}t_3\big)\big(x- q^{l_4 -1/2}t_4\big),\label{eq:acbA2E}
\end{gather}
and
\begin{gather}
b_4= -\big(q^{1/2} +q^{-1/2} \big), \qquad b_3= \sum _{n=1}^4 \big( q^{h_n} + q^{l_n} \big) t_n, \nonumber\\
b_1 = q^{(l_1 +l_2 +l_3 +l_4 +h_1 +h_2 +h_3+h_4)/2} t_1 t_2 t_3 t_4 \sum _{n=1}^4 \big( q^{-h_n} + q^{-l_n} \big) t_n^{-1}, \nonumber \\
b_0 = - q^{(l_1 +l_2 +l_3 +l_4 +h_1 +h_2 +h_3+h_4)/2} \big( q^{1/2} + q^{-1/2} \big)t_1 t_2 t_3 t_4 .\label{eq:b4310A2E}
\end{gather}
Note that the parameter $E$ is independent of the exponents about $x=0, \infty $ and apparency of the regular singularities $x=0, \infty $. Therefore it is reasonable to regard $E$ as an accessory parameter.

Next we consider a characterization of the coefficients $b_4$, $b_3$, $b_1$ and $b_0$ as an analogue of Theorem~\ref{thm:A3}. Namely we have the following theorem, which is proved similarly to Theorem \ref{thm:A3}.
\begin{Theorem} \label{thm:A2}
Assume that the functions $a(x)$, $b(x)$ and $c(x)$ take the form of equation~\eqref{eq:acbA2E}. If the difference of the exponents about $x=0$ is $1$, the difference of the exponents about $x=\infty $ is $1$, and the regular singularities $x=0, \infty $ are apparent, then we obtain equation~\eqref{eq:b4310A2E} and recover a variant of the $q$-Heun equation in equation~\eqref{eq:A2E}.
\end{Theorem}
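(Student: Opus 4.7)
The plan is to mirror the argument used for Theorem~\ref{thm:A3}, running four conditions (two characteristic equations and two apparency conditions) in parallel to pin down $b_4$, $b_3$, $b_1$, $b_0$ one at a time. With $a(x)$, $c(x)$ monic of degree $4$ and $b(x)$ of the form given in equation~(\ref{eq:acbA2E}), equation~(\ref{eq:A2E}) fits the framework of Section~\ref{sec:regsing} with $M=0$, $N=4$ (or equivalently $M=-2$, $N=2$ after multiplying by $x^{-2}$, which gives the same local analysis).

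First I would determine $b_4$ and $b_0$ from the two characteristic equations. At $x=\infty$, equation~(\ref{eq:expoinf}) becomes $q^\lambda + b_4 + q^{-\lambda}=0$, so setting $t=q^\lambda$ gives $t^2 + b_4 t + 1 = 0$; the hypothesis that the two roots satisfy $\lambda_2-\lambda_1=1$ forces $q^{\lambda_1}q^{\lambda_2}=1$ in combination with $q^{\lambda_2-\lambda_1}=q$, whence $\{\lambda_1,\lambda_2\}=\{-1/2,1/2\}$ and $b_4=-(q^{1/2}+q^{-1/2})$. At $x=0$, the characteristic equation reads $q^{-\lambda}a(0)+b_0+q^{\lambda}c(0)=0$ with $a(0)=q^{h_1+h_2+h_3+h_4+2}t_1t_2t_3t_4$ and $c(0)=q^{l_1+l_2+l_3+l_4-2}t_1t_2t_3t_4$; the product of the two roots $q^{\lambda_1},q^{\lambda_2}$ equals $a(0)/c(0)$ while the ratio equals $q$, giving $\lambda_1=(h_1+h_2+h_3+h_4-l_1-l_2-l_3-l_4+3)/2$ and, via the sum of the roots, the stated value of $b_0$.

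Next I would use apparency at each singularity to fix $b_3$ and $b_1$. Because the two exponents at $x=\infty$ differ by exactly $1$, the apparency test (\ref{eq:appsing}) collapses to the single linear relation
\begin{gather*}
q^{\lambda_1} u_{N-1}+v_{N-1}+q^{-\lambda_1} w_{N-1}=0,
\end{gather*}
i.e., $q^{-1/2}a_3+b_3+q^{1/2}c_3=0$; substituting $a_3=-\sum_n q^{h_n+1/2}t_n$ and $c_3=-\sum_n q^{l_n-1/2}t_n$ immediately yields $b_3=\sum_{n=1}^4(q^{h_n}+q^{l_n})t_n$. By the same token, apparency at $x=0$ reduces to $q^{-\lambda_1}a_1+b_1+q^{\lambda_1}c_1=0$, where $a_1$ and $c_1$ are the coefficients of $x$ in $a(x)$ and $c(x)$, i.e., the (signed) third elementary symmetric functions of the zeros. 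A direct substitution and simplification, using $\lambda_1=(h_1+h_2+h_3+h_4-l_1-l_2-l_3-l_4+3)/2$, produces the value of $b_1$ in equation~(\ref{eq:b4310A2E}).

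There is no conceptual obstacle: the four unknowns $b_4,b_3,b_1,b_0$ are each determined by a different linear condition, and in the order chosen the conditions decouple. The only real hazard is clerical, namely keeping track of signs and of shifts like $h_n\pm 1/2$, $l_n\pm 1/2$ when expanding the elementary symmetric functions of the zeros of $a(x)$ and $c(x)$. Once the four coefficients have been identified, comparison with the explicit expansion of $A^{\langle 2 \rangle}$ in equation~(\ref{eq:qsecond}) (equivalently with the operator data in equation~(\ref{eq:A2xmu})) shows that we have recovered exactly the $q$-difference equation $(A^{\langle 2 \rangle}-E)g(x)=0$, completing the proof.
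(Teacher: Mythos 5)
Your proposal is correct and follows essentially the same route as the paper, which states only that Theorem~\ref{thm:A2} ``is proved similarly to Theorem~\ref{thm:A3}''; your argument is precisely that similar proof, with the two characteristic equations fixing $b_4$, $b_0$ and the two apparency conditions (each collapsing to a single linear relation since the exponent differences are $1$) fixing $b_3$, $b_1$, and all the computations check out.
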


We are going to obtain a differential equation from the difference equation~(\ref{eq:A2E}) by taking a suitable limit. We rewrite equation~(\ref{eq:A2E}) as
\begin{gather*}
 \big(x-t_1 q^{h_1 +1/2} \big) \big(x-t_2 q^{h_2 +1/2} \big) \big(x-t_3 q^{h_3 +1/2} \big) \big(x-t_4 q^{h_4 +1/2} \big) g(x/q)\\ %\label{eq:qHeunA2} \\
\qquad{} + \big(x - t_1 q^{l_1 -1/2} \big) \big(x - t_2 q^{l_2 -1/2} \big) \big(x - t_3 q^{l_3 -1/2} \big) \big(x - t_4 q^{l_4 -1/2} \big) g(xq) \nonumber \\
\qquad{} + \bigg[ {}-\big(q^{1/2} +q^{-1/2} \big) x^4 +\sum _{n=1}^4 t_n \big( q^{h_n} + q^{l_n} \big) x^3 \nonumber \\
\qquad{} - \big(2 (t_1 t_2 +t_1 t_3 +t_1 t_4 +t_2 t_3 +t_2 t_4 +t_3 t_4 ) +(q-1)E_1 +(q-1)^2 \tilde{E} \big) x^2 \nonumber \\
\qquad {} + t_1 t_2 t_3 t_4 q^{(l_1 +l_2 +l_3 +l_4 +h_1 +h_2 +h_3 +h_4)/2} \bigg\{ \sum _{n=1}^4 \big( 1/\big(t _n q^{h_n}\big) + 1/\big(t_n q^{l_n}\big) \big) x \nonumber \\
\qquad{} - \big(q^{1/2} +q^{-1/2} \big) \bigg\} \bigg] g(x) =0, \nonumber
\end{gather*}
where
\begin{gather*}
E_1= \sum _{1\leq m<n\leq 4}(h_m +h_n +l_m +l_n )t_m t_n.
\end{gather*}
Set $q=1+ \varepsilon $. Then we obtain the following differential equation after taking the limit $\varepsilon \to 0 $.
\begin{gather}
x^2(x-t_1)(x-t_2) (x-t_3)(x-t_4) \left[ g''(x) + \left\{ - \frac{2\tilde{l}}{x} +\sum _{i=1}^4 \frac{h_i - l_i +1 }{x- t_i} \right\} g'(x) \right]\nonumber \\
\qquad{} - \left[ \frac{x^4}{4} + \left\{ \sum _{i=1}^4 (2 h_i -2 l_i +1)t_i \right\} \frac{x^3}{4} + \tilde{B} x^2 \right.\nonumber \\
\left.\qquad{} + t_1 t_2 t_3 t_4 \tilde{l} \left\{\sum _{i=1}^4 \frac{\tilde{l} -h_i+ l_i }{t_i} \right\} x + t_1 t_2 t_3 t_4 \tilde{l} \big(\tilde{l} +1\big) \right] g(x) =0, \label{eq:Fuchs5sing}
\end{gather}
where
\begin{gather*}
\tilde{l} = (h_1 + h_2 +h_3 +h_4 -l_1 -l_2 -l_3 -l_4 +3)/2 , \qquad \tilde{B} = \tilde{E} +\mbox{const} .
\end{gather*}
This is a Fuchsian differential equation with six singularities $\{ 0,t_1, t_2, t_3, t_4, \infty \}$ and the local exponents are given by the following Riemann scheme
\begin{gather*}
\begin{pmatrix}
x=0 & x=t_1 & x=t_2 & x=t_3 & x=t_4 & x=\infty \\
\tilde{l} & 0 & 0 & 0 & 0 & - 1/2 \\
\tilde{l} +1 & l_1-h_1 & l_2-h_2 & l_3-h_3 & l_4-h_4 & 1/2
\end{pmatrix} .
\end{gather*}
By the linear fractional transformation $z=(x-t_2)(t_3-t_1)/\{ (x-t_1)(t_3-t_2) \} $ and the gauge transformation $g(x)=x^{\tilde{l}} (x-t_1) ^{-\tilde{l} +1/2} \tilde{g}(z)$, the function $y=\tilde{g}(z) $ satisfies Heun's differential equation
\begin{gather*}
\frac{{\rm d}^2y}{{\rm d}z^2} + \left( \frac{\gamma}{z}+\frac{\delta }{z-1}+\frac{\epsilon}{z-t}\right) \frac{{\rm d}y}{{\rm d}z} + \frac{\alpha ' \beta ' z -q}{z(z - 1)(z - t)} y= 0,%\label{eq:Heun2}
\end{gather*}
where $t=(t_4-t_2)(t_3-t_1)/\{ (t_4-t_1)(t_3-t_2) \}$, $\gamma = 1 +h_2 -l_2$, $\delta = 1 +h_3 -l_3$, $\epsilon = 1+h_4-l_4$, $\{ \alpha ' ,\beta ' \} = \{ \tilde{l}-1 /2, \tilde{l} -1/2 + l_1-h_1\}$, $q= \tilde{E}/((t_1-t_4)(t_3 -t_2)) + \mbox{const}$. Hence the points $x=0 , \infty $ in equation~(\ref{eq:Fuchs5sing}) are essentially non-singular (apparent), and we may say that the equation $\big( A^{\langle 2 \rangle} -E\big) g(x)=0$ is a $q$-analogue of the Fuchsian equation with four regular singularities $\{ t_1, t_2 ,t_3 , t_4 \}$, $t_1, t_2, t_3 , t_4 \in \C _{\neq 0}$.

\section[Invariant subspaces related with the $q$-Heun equation and its variants]{Invariant subspaces related with the $\boldsymbol{q}$-Heun equation\\ and its variants} \label{sec:inv}

In order to find special solutions to the $q$-Heun equation or its variants, we consider quasi-exact solvability~\cite{Tur}, i.e., we investigate subspaces which are invariant under the action of the opera\-tors~$A^{\langle 4 \rangle}$, $A^{\langle 3 \rangle}$ or $A^{\langle 2 \rangle}$.

We look for subspaces which are invariant under the action of the operator $A^{\langle 4 \rangle}$ given in equation~(\ref{eq:qH}).
\begin{Proposition}Set
\begin{gather}
\lambda _1 = (h_1 +h_2 -l_1-l_2- \alpha _1- \alpha _2 -\beta +2 )/2 , \nonumber\\
 \lambda _2 = (h_1 +h_2 -l_1-l_2- \alpha _1- \alpha _2 +\beta +2)/2.\label{eq:A4la1la20}
\end{gather}
Let $\lambda \in \{ \lambda _1 ,\lambda _2\}$, $\alpha \in \{ \alpha _1 , \alpha _2 \}$ and assume that $n:=-\lambda - \alpha $ is a non-negative integer. Let $V^{\langle 4 \rangle}$ be the space spanned by the monomials $x^{\lambda +k} $, $k=0,\dots ,n$, i.e.,
\begin{gather*}
V^{\langle 4 \rangle} = \big\{ c_0 x^{\lambda } +c_1 x^{\lambda +1} +\dots + c_n x^{\lambda +n} \,|\, c_0 , c_1 , \dots , c_n \in \C \big\} .
\end{gather*}
Then the operator $A^{\langle 4 \rangle} $ preserves the space $V^{\langle 4 \rangle} $.
\end{Proposition}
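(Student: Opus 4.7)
The plan is to apply the three-term formula for $A^{\langle 4 \rangle} x^{\mu}$ directly to each basis monomial $x^{\lambda+k}$ of $V^{\langle 4 \rangle}$, for $k = 0, 1, \ldots, n$, and to verify that the image still lies in $V^{\langle 4 \rangle}$. Writing
\[
A^{\langle 4 \rangle} x^{\lambda+k} = d^{\langle 4 \rangle, +}(\lambda+k)\, x^{\lambda+k+1} + d^{\langle 4 \rangle, 0}(\lambda+k)\, x^{\lambda+k} + d^{\langle 4 \rangle, -}(\lambda+k)\, x^{\lambda+k-1},
\]
one sees that the diagonal term always stays in $V^{\langle 4 \rangle}$, and for $1 \le k \le n-1$ both shifted exponents $\lambda+k \pm 1$ belong to $\{\lambda, \ldots, \lambda+n\}$, so nothing leaves $V^{\langle 4 \rangle}$ in those cases. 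The only potentially problematic contributions are therefore the monomials $x^{\lambda-1}$ arising at $k=0$ and $x^{\lambda+n+1}$ arising at $k=n$.

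The heart of the argument is that both of these boundary terms are killed by the hypotheses on $\lambda$ and $\alpha$. The coefficient of $x^{\lambda-1}$ is $d^{\langle 4 \rangle, -}(\lambda)$, which is precisely the left-hand side of the characteristic equation at $x=0$ derived in Section~\ref{eq:locqH}; its roots are exactly the exponents $\lambda_1, \lambda_2$ recorded in equation~\eqref{eq:A4la1la20}, so the hypothesis $\lambda \in \{\lambda_1, \lambda_2\}$ forces $d^{\langle 4 \rangle, -}(\lambda) = 0$. The coefficient of $x^{\lambda+n+1}$ is $d^{\langle 4 \rangle, +}(\lambda+n)$, and the characteristic equation at $x=\infty$ shows that $d^{\langle 4 \rangle, +}(-\mu)=0$ for $\mu \in \{\alpha_1, \alpha_2\}$; the condition $n = -\lambda - \alpha$ rewrites as $\lambda + n = -\alpha$, which makes this coefficient vanish as well.

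Putting the two vanishings together, $A^{\langle 4 \rangle}$ sends each basis monomial $x^{\lambda+k}$ to a linear combination of $x^{\lambda}, x^{\lambda+1}, \ldots, x^{\lambda+n}$, which proves the invariance $A^{\langle 4\rangle} V^{\langle 4 \rangle} \subseteq V^{\langle 4 \rangle}$. There is no genuine obstacle in this argument: it reduces to the explicit three-term action of $A^{\langle 4 \rangle}$ on monomials together with the two characteristic equations already extracted earlier in the paper. The integrality assumption $n = -\lambda - \alpha \in \Zint_{\geq 0}$ is precisely the compatibility condition that pairs an exponent at $x=0$ with an exponent at $x=\infty$ so that a nonzero finite-dimensional invariant monomial subspace can arise.
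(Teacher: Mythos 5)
Your proof is correct and follows essentially the same route as the paper: expand $A^{\langle 4\rangle}x^{\lambda+k}$ via the three-term action, observe that only the boundary terms at $k=0$ and $k=n$ could leave the space, and kill them using $d^{\langle 4\rangle,-}(\lambda)=0$ (characteristic equation at $x=0$) and $d^{\langle 4\rangle,+}(\lambda+n)=d^{\langle 4\rangle,+}(-\alpha)=0$ (characteristic equation at $x=\infty$). No discrepancies to report.
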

\begin{proof}The operator $A ^{\langle 4 \rangle} $ acts on $x^{\lambda +k} $ as
\begin{gather*}
 A^{\langle 4 \rangle} x^{\lambda +k} = d^{\langle 4 \rangle, +} (\lambda +k) x^{\lambda +k +1 } + d^{\langle 4 \rangle, 0} (\lambda +k) x^{\lambda +k } + d^{\langle 4 \rangle, -} (\lambda +k ) x^{\lambda +k -1}.
\end{gather*}
See equation~(\ref{eq:d4+}) for the values $ d^{\langle 4 \rangle, +} (\lambda +k)$, $d^{\langle 4 \rangle, 0} (\lambda +k)$ and $d^{\langle 4 \rangle, -} (\lambda +k)$. Therefore we have $ A^{\langle 4 \rangle} x^{\lambda +k} \in V^{\langle 4 \rangle}$ for $k=1, \dots , n-1$. When $k=0$, we have $d^{\langle 4 \rangle, -} (\lambda ) =0 $ and $A^{\langle 4 \rangle} x^{\lambda } \in V^{\langle 4 \rangle} $. When $k=n$, we have $d^{\langle 4 \rangle, +} (\lambda +n ) = d^{\langle 4 \rangle, +} (-\alpha ) = 0 $ and $A^{\langle 4 \rangle} x^{\lambda + n} \in V^{\langle 4 \rangle} $.
\end{proof}

Note that the values $\lambda _1 $ and $\lambda _2 $ in equation~(\ref{eq:A4la1la20}) are exponents of the $q$-Heun equation (equation~(\ref{eq:A4E})) about $x=0$, and the values $ \alpha _1$ and $\alpha _2 $ are exponents about $x=\infty $.

We look for subspaces which are invariant under the action of the operator $A^{\langle 3 \rangle}$ in equation~(\ref{eq:qthird}).
\begin{Proposition}Set
\begin{gather}
\lambda _1 = (h_1 +h_2 +h_3 -l_1-l_2-l_3 -\beta +3)/2 , \nonumber\\
 \lambda _2 = (h_1 +h_2 +h_3 -l_1-l_2-l_3 +\beta +3)/2 .\label{eq:A3la1la20}
\end{gather}
Let $\lambda \in \{ \lambda _1 ,\lambda _2\}$ and assume that $n:= -\lambda +1/2 $ is a non-negative integer.
Let $V^{\langle 3 \rangle}$ be the space spanned by the monomials $x^{\lambda +k} $, $k=0,\dots ,n$, i.e.,
\begin{gather*}
V^{\langle 3 \rangle} = \big\{ c_0 x^{\lambda } +c_1 x^{\lambda +1} +\dots + c_n x^{1/2} \,|\, c_0 , c_1 , \dots ,c_n \in \C \big\} .
\end{gather*}
Then the operator $A^{\langle 3 \rangle} $ preserves the space $V^{\langle 3 \rangle} $.
\end{Proposition}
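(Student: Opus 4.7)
The plan is to adapt the proof of the previous proposition (for $A^{\langle 4\rangle}$) essentially verbatim, using the four-term action of $A^{\langle 3\rangle}$ on a monomial given by equation~(\ref{eq:A3xmu}). Applying $A^{\langle 3\rangle}$ to a basis vector $x^{\lambda+k}$ produces exponents $\lambda+k-1,\lambda+k,\lambda+k+1,\lambda+k+2$ with coefficients $d^{\langle 3\rangle,-}(\lambda+k)$, $d^{\langle 3\rangle,0}(\lambda+k)$, $d^{\langle 3\rangle,+}(\lambda+k)$, $d^{\langle 3\rangle,++}(\lambda+k)$ respectively. For the ``interior'' indices $1\le k\le n-2$ all four resulting exponents already lie in the interval $[\lambda,\lambda+n]=[\lambda,1/2]$, so $A^{\langle 3\rangle}x^{\lambda+k}\in V^{\langle 3\rangle}$ without any further input. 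Hence it is enough to treat the three boundary values $k=0$, $k=n-1$, $k=n$.

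At $k=0$ the only exponent escaping $[\lambda,\lambda+n]$ is $\lambda-1$, with coefficient $d^{\langle 3\rangle,-}(\lambda)$; this vanishes because $\lambda\in\{\lambda_1,\lambda_2\}$ is by hypothesis a root of the characteristic equation $d^{\langle 3\rangle,-}(\lambda)=0$ derived in Section~\ref{sec:A3}. At $k=n-1$ the escaping exponent is $\lambda+n+1=3/2$, with coefficient $d^{\langle 3\rangle,++}(-1/2)$. At $k=n$, with $\lambda+n=1/2$, both of the exponents $3/2$ and $5/2$ escape, carrying coefficients $d^{\langle 3\rangle,+}(1/2)$ and $d^{\langle 3\rangle,++}(1/2)$ respectively.

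The whole argument therefore reduces to verifying the three identities
\[
d^{\langle 3\rangle,++}(1/2)=0,\qquad d^{\langle 3\rangle,++}(-1/2)=0,\qquad d^{\langle 3\rangle,+}(1/2)=0.
\]
The first two follow by direct inspection of the formula for $d^{\langle 3\rangle,++}$ in equation~(\ref{eq:d3++}), since $q^{1/2}+q^{-1/2}-q^{1/2}-q^{-1/2}=0$. For the third, substituting $\mu=1/2$ into the formula for $d^{\langle 3\rangle,+}$ collapses each summand $q^{h_i}+q^{l_i}-q^{h_i+1/2-\mu}-q^{l_i-1/2+\mu}$ to zero, so the whole sum vanishes. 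No genuine obstacle appears; the only thing worth noting is that the vanishing $d^{\langle 3\rangle,+}(1/2)=0$ is precisely the same cancellation that was used in Section~\ref{sec:A3} to establish apparency of the regular singularity $x=\infty$ of equation~(\ref{eq:A3E}). In effect, the invariance of $V^{\langle 3\rangle}$ is a direct consequence of the characteristic data at $x=0$ and $x=\infty$ already assembled there.
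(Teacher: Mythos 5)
Your proof is correct and follows essentially the same route as the paper: the paper likewise expands $A^{\langle 3\rangle}x^{\lambda+k}$ via equation~(\ref{eq:A3xmu}) and concludes from the four vanishings $d^{\langle 3\rangle,-}(\lambda)=d^{\langle 3\rangle,++}(-1/2)=d^{\langle 3\rangle,++}(1/2)=d^{\langle 3\rangle,+}(1/2)=0$. Your version merely makes the boundary bookkeeping at $k=0,n-1,n$ explicit, which the paper leaves implicit.
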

\begin{proof}It follows from equation~(\ref{eq:A3xmu}) that
\begin{gather*}
 A^{\langle 3 \rangle} x^{\lambda +k} = d^{\langle 3 \rangle, ++} (\lambda +k) x^{\lambda +k +2 } + d^{\langle 3 \rangle, +} (\lambda +k) x^{\lambda +k +1 } \\
\hphantom{A^{\langle 3 \rangle} x^{\lambda +k} =}{} + d^{\langle 3 \rangle, 0} (\lambda +k) x^{\lambda +k } + d^{\langle 3 \rangle, -} (\lambda +k ) x^{\lambda +k -1} .
\end{gather*}
Since $d^{\langle 3 \rangle, -} (\lambda ) =d^{\langle 3 \rangle, ++} (-1/2 ) = d^{\langle 3 \rangle, ++} (1/2 ) = d^{\langle 3 \rangle, +} (1/2 )= 0 $, we obtain the proposition.
\end{proof}

Note that the values $\lambda _1 $ and $\lambda _2 $ in equation~(\ref{eq:A3la1la20}) are exponents of equation~(\ref{eq:A3E}) about $x=0$, and the values $ -1/2$ and $1/2 $ are exponents about $x=\infty $.

Invariant subspaces of the operator $A^{\langle 2 \rangle}$ in equation~(\ref{eq:qsecond}) are described as follows:
\begin{Proposition} Set
\begin{gather*}
\lambda = (h_1 +h_2 +h_3 + h_4 -l_1-l_2-l_3 -l_4 +3)/2
\end{gather*}
and assume that $n:= -\lambda +1/2 $ is a non-negative integer. Let $V^{\langle 2 \rangle}$ be the space spanned by the monomials $x^{\lambda +k} $, $k=0,\dots ,n$, i.e.,
\begin{gather*}
V^{\langle 2 \rangle} = \big\{ c_0 x^{\lambda } +c_1 x^{\lambda +1} +\dots + c_n x^{1/2} \,|\, c_0 , c_1 , \dots , c_n \in \C \big\} .
\end{gather*}
Then the operator $A^{\langle 2 \rangle} $ preserves the space $V^{\langle 2 \rangle} $.
\end{Proposition}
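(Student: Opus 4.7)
The plan is to mimic the two previous propositions: apply $A^{\langle 2 \rangle}$ monomial by monomial via the formula \eqref{eq:A2xmu} and verify that all potentially offending terms vanish at the two ends of the range $\{x^{\lambda}, x^{\lambda+1}, \dots, x^{\lambda+n}\}$ with $\lambda + n = 1/2$. For a generic index $2 \le k \le n-2$, the image
\begin{gather*}
A^{\langle 2 \rangle} x^{\lambda+k} = d^{\langle 2 \rangle, ++}(\lambda+k)\, x^{\lambda+k+2} + d^{\langle 2 \rangle, +}(\lambda+k)\, x^{\lambda+k+1} + d^{\langle 2 \rangle, 0}(\lambda+k)\, x^{\lambda+k} \\
\hphantom{A^{\langle 2 \rangle} x^{\lambda+k} =}{}+ d^{\langle 2 \rangle, -}(\lambda+k)\, x^{\lambda+k-1} + d^{\langle 2 \rangle, --}(\lambda+k)\, x^{\lambda+k-2}
\end{gather*}
lies in $V^{\langle 2 \rangle}$ by the range of exponents alone, so no vanishing is needed. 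The work is entirely in the four boundary cases $k=0,1$ (lower end) and $k=n-1,n$ (upper end).

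At the lower end, I must show $d^{\langle 2 \rangle, --}(\lambda) = d^{\langle 2 \rangle, --}(\lambda+1) = 0$ (to kill the $x^{\lambda-2}$ terms produced by $x^{\lambda}$ and $x^{\lambda+1}$) and $d^{\langle 2 \rangle, -}(\lambda) = 0$ (to kill the $x^{\lambda-1}$ term produced by $x^{\lambda}$). The first two vanishings are exactly the statement that $\lambda$ and $\lambda+1$ are the two roots of the characteristic equation $d^{\langle 2 \rangle, --}(\cdot) = 0$ at $x=0$, which has already been established in Section~\ref{sec:A2}. The third vanishing is precisely the apparency condition of the regular singularity $x=0$ worked out there.

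At the upper end, I must show $d^{\langle 2 \rangle, ++}(1/2) = d^{\langle 2 \rangle, ++}(-1/2) = 0$ and $d^{\langle 2 \rangle, +}(1/2) = 0$. The first two come from the identity $d^{\langle 2 \rangle, ++}(\mu) = q^{\mu}+q^{-\mu}-q^{1/2}-q^{-1/2}$, which has $\mu = \pm 1/2$ as roots. The third one is an immediate telescoping: at $\mu = 1/2$, each summand in $d^{\langle 2 \rangle, +}(\mu) = \sum_{i=1}^{4}\bigl(q^{h_i}+q^{l_i}-q^{h_i+1/2-\mu}-q^{l_i-1/2+\mu}\bigr) t_i$ collapses to zero, which is the apparency condition at $x=\infty$ recorded in Section~\ref{sec:A2}.

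I do not expect any genuine obstacle: all the required vanishings have already been extracted in Section~\ref{sec:A2} while discussing the exponents and the apparency of both singularities of $\bigl(A^{\langle 2 \rangle}-E\bigr)g(x)=0$, and the remaining checks are one-line computations. The slight subtlety is just bookkeeping: verifying that each of the four boundary monomials produces coefficients that all lie in the already-established list of vanishings, which is the content of the proof.
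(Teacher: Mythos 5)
Your proposal is correct and follows exactly the paper's argument: both reduce the claim to the same six vanishings $d^{\langle 2 \rangle, --}(\lambda) = d^{\langle 2 \rangle, -}(\lambda) = d^{\langle 2 \rangle, --}(\lambda+1) = d^{\langle 2 \rangle, ++}(-1/2) = d^{\langle 2 \rangle, ++}(1/2) = d^{\langle 2 \rangle, +}(1/2) = 0$, all of which were already established in Section~\ref{sec:A2}. The only difference is that you spell out the boundary bookkeeping more explicitly than the paper does.
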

\begin{proof}
It follows from equation~(\ref{eq:A2xmu}) that
\begin{gather*}
 A^{\langle 2 \rangle} x^{\lambda +k} = d^{\langle 2 \rangle, ++} (\lambda +k) x^{\lambda +k +2 } + d^{\langle 2 \rangle, +} (\lambda +k) x^{\lambda +k +1 } \\
\hphantom{A^{\langle 2 \rangle} x^{\lambda +k} =}{} + d^{\langle 2 \rangle, 0} (\lambda +k) x^{\lambda +k } + d^{\langle 2 \rangle, -} (\lambda +k ) x^{\lambda +k -1} + d^{\langle 2 \rangle, --} (\lambda +k ) x^{\lambda +k -2} .
\end{gather*}
Since $d^{\langle 2 \rangle, --} (\lambda ) =d^{\langle 2 \rangle, -} (\lambda ) = d^{\langle 2 \rangle, --} (\lambda +1 ) = d^{\langle 2 \rangle, ++} (-1/2 ) = d^{\langle 2 \rangle, ++} (1/2 ) = d^{\langle 2 \rangle, +} (1/2 )= 0 $, we obtain the proposition.
\end{proof}

If the dimension of the invariant subspace is one, then the eigenvalue and the eigenfunction of the subspace are calculated explicitly, and they are described as follows:
\begin{Proposition}
Let $\lambda _1$, $\lambda _2 $ be the values in equation~\eqref{eq:A4la1la20}, $\lambda \in \{ \lambda _1 ,\lambda _2\}$, $\alpha \in \{ \alpha _1 , \alpha _2 \}$ and assume that $\lambda =- \alpha $. Then the operator $A^{\langle 4 \rangle} $ preserves the one-dimensional space spanned by the function $x^{\lambda } $, and we have
\begin{gather*}
 A^{\langle 4 \rangle} x^{\lambda } = d^{\langle 4 \rangle, 0} (-\alpha ) x^{\lambda }, \\
 d^{\langle 4 \rangle, 0} (- \alpha ) = -\big( q^{h_1 +1/2} t_1 +q^{h_2 +1/2} t_2 \big) q^{ \alpha } -\big( q^{l_1-1/2} t_1 +q^{l_2-1/2} t_2 \big) q^{\alpha _1 +\alpha _2 - \alpha } .
\end{gather*}
\end{Proposition}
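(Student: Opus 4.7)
The plan is to deduce this proposition as the $n=0$ specialization of the preceding proposition on invariant subspaces of $A^{\langle 4 \rangle}$, and to evaluate the three relevant coefficients explicitly at $\mu = \lambda = -\alpha$.

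First, I would apply the action formula for $A^{\langle 4 \rangle}$ on monomials, namely
\begin{gather*}
A^{\langle 4 \rangle} x^{\lambda} = d^{\langle 4 \rangle,+}(\lambda)\, x^{\lambda+1} + d^{\langle 4 \rangle,0}(\lambda)\, x^{\lambda} + d^{\langle 4 \rangle,-}(\lambda)\, x^{\lambda-1},
\end{gather*}
with the coefficients given by equation~(\ref{eq:d4+}). The goal is then to show that both $d^{\langle 4 \rangle,+}(\lambda)$ and $d^{\langle 4 \rangle,-}(\lambda)$ vanish, leaving only the middle term.

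Next, I would verify the two vanishings separately. Since $\lambda \in \{\lambda_1,\lambda_2\}$ is by definition an exponent of the $q$-Heun equation about $x=0$, it is a root of the characteristic equation $d^{\langle 4 \rangle,-}(\lambda)=0$; this was established just after equation~(\ref{eq:A4la1la2}). For the upper coefficient, I would use the hypothesis $\lambda = -\alpha$ together with the fact that $\alpha \in \{\alpha_1,\alpha_2\}$ is an exponent about $x=\infty$, which is precisely the statement $d^{\langle 4 \rangle,+}(-\alpha) = 0$ (see the characteristic equation at $x=\infty$ derived earlier in Section~\ref{eq:locqH}). These two facts together imply $A^{\langle 4 \rangle} x^{\lambda} = d^{\langle 4 \rangle,0}(\lambda)\, x^{\lambda}$, so $\C x^{\lambda}$ is preserved and the eigenvalue is $d^{\langle 4 \rangle,0}(\lambda)$.

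Finally, I would substitute $\lambda=-\alpha$ into the explicit expression for $d^{\langle 4 \rangle,0}(\mu)$ from equation~(\ref{eq:d4+}):
\begin{gather*}
d^{\langle 4 \rangle,0}(-\alpha) = -\bigl(q^{h_1+1/2} t_1 + q^{h_2+1/2} t_2\bigr) q^{\alpha} - \bigl(q^{l_1-1/2} t_1 + q^{l_2-1/2} t_2\bigr) q^{\alpha_1 + \alpha_2 - \alpha},
\end{gather*}
which is exactly the stated eigenvalue. There is no real obstacle here: the proposition is essentially the $n=0$ case of the preceding proposition, and the work reduces to recognizing the two vanishing coefficients as the characteristic equations at the two regular singularities and then reading off the middle coefficient.
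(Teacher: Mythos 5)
Your proposal is correct and matches the paper's (implicit) argument: the paper states this proposition without a separate proof precisely because it is the $n=0$ case of the preceding invariant-subspace proposition, whose proof rests on the same two vanishings $d^{\langle 4 \rangle,-}(\lambda)=0$ and $d^{\langle 4 \rangle,+}(-\alpha)=0$ that you identify via the characteristic equations at $x=0$ and $x=\infty$. Your evaluation of $d^{\langle 4 \rangle,0}(-\alpha)$ from equation~(\ref{eq:d4+}) is also accurate.
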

If $\lambda =\lambda _1$, $\alpha =\alpha _1$ and $-\lambda _1 - \alpha _1 =0$, then we recover the eigenfunction of $A^{\langle 4 \rangle} $ in the introduction (see equation~(\ref{eq:A4xla1})).
\begin{Proposition} Assume that $\beta = h_1 +h_2 +h_3 -l_1-l_2-l_3 +2$ or $\beta = -( h_1 +h_2 +h_3 -l_1-l_2-l_3 +2)$. Then the operator $A^{\langle 3 \rangle} $ preserves the one-dimensional space spanned by the function~$x^{1/2 }$, and we have
\begin{gather*}
 A^{\langle 3 \rangle} x^{1/2 } = \bigg[ \sum _{1\leq i <j \leq 3} \big( q^{h_i + h_j +1/2 } + q^{l_i + l_j -1/2 } \big) t_i t_j \bigg] x^{1/2 }.
\end{gather*}
\end{Proposition}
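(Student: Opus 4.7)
The plan is to recognize this proposition as the $n=0$ specialization of the preceding Proposition about invariant subspaces of $A^{\langle 3 \rangle}$, combined with an explicit calculation of the action of $A^{\langle 3 \rangle}$ on $x^{1/2}$ using the formulas for $d^{\langle 3 \rangle, \star}(\mu)$ in equation~(\ref{eq:d3++}).

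First I would translate the stated hypothesis on $\beta$ into the condition $n = -\lambda + 1/2 = 0$ of the preceding proposition. Setting $\lambda = 1/2$ in equation~(\ref{eq:A3la1la20}) gives $\lambda_1 = 1/2$ exactly when $\beta = h_1 + h_2 + h_3 - l_1 - l_2 - l_3 + 2$, and $\lambda_2 = 1/2$ exactly when $\beta = -(h_1 + h_2 + h_3 - l_1 - l_2 - l_3 + 2)$. So in either case the preceding proposition (with $n=0$) already guarantees that the one-dimensional space $\C x^{1/2}$ is invariant under $A^{\langle 3 \rangle}$.

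Next I would compute the eigenvalue by applying equation~(\ref{eq:A3xmu}) with $\mu = 1/2$. Direct substitution into the formulas of equation~(\ref{eq:d3++}) shows that the coefficient $d^{\langle 3 \rangle, ++}(1/2) = q^{-1/2} + q^{1/2} - q^{1/2} - q^{-1/2}$ vanishes (the $x = \infty$ characteristic equation); $d^{\langle 3 \rangle, +}(1/2) = \sum_{i=1}^3 (q^{h_i} + q^{l_i} - q^{h_i} - q^{l_i}) t_i$ also vanishes (this is the apparency condition at $x=\infty$ that was verified in Section~\ref{sec:A3}); and $d^{\langle 3 \rangle, -}(1/2) = 0$ by the hypothesis on $\beta$, since this is precisely the characteristic equation at $x=0$ evaluated at $\lambda = 1/2$. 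All that remains is
\begin{gather*}
A^{\langle 3 \rangle} x^{1/2} = d^{\langle 3 \rangle, 0}(1/2) x^{1/2} = \bigg[ \sum_{1 \leq i < j \leq 3} \big( q^{h_i + h_j + 1/2} + q^{l_i + l_j - 1/2} \big) t_i t_j \bigg] x^{1/2},
\end{gather*}
which matches the claimed formula.

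There is essentially no obstacle: the entire content is that the four coefficients $d^{\langle 3 \rangle, ++}$, $d^{\langle 3 \rangle, +}$, $d^{\langle 3 \rangle, -}$ vanish at $\mu = 1/2$ (the first two from $x=\infty$ being an apparent regular singularity with exponents $\pm 1/2$, the third from the hypothesis forcing $1/2$ to be an exponent at $x=0$), after which reading off $d^{\langle 3 \rangle, 0}(1/2)$ from equation~(\ref{eq:d3++}) is routine. The only minor care needed is to verify the bookkeeping of exponents translating the two sign choices of $\beta$ into a single unified statement, which is handled at the outset.
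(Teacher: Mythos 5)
Your proof is correct and follows exactly the route the paper intends: the paper states this result without a separate proof, presenting it as the one-dimensional ($n=0$) case of the preceding proposition on $V^{\langle 3 \rangle}$, with the eigenvalue read off as $d^{\langle 3 \rangle, 0}(1/2)$ from equation~(3.11). Your verification that the two sign choices of $\beta$ correspond to $\lambda_1=1/2$ or $\lambda_2=1/2$, and that $d^{\langle 3 \rangle, ++}(1/2)=d^{\langle 3 \rangle, +}(1/2)=d^{\langle 3 \rangle, -}(1/2)=0$, is exactly the intended argument.
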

\begin{Proposition} Assume that $h_1 +h_2 +h_3 + h_4 -l_1-l_2-l_3 -l_4 +2=0 $. Then the opera\-tor~$A^{\langle 2 \rangle} $ preserves the one-dimensional space spanned by the function $x^{1/2 } $, and we have
\begin{gather*}
 A^{\langle 2 \rangle} x^{1/2 } = \bigg[ \sum _{1\leq i <j \leq 4} \big( q^{h_i + h_j +1/2 } + q^{l_i + l_j -1/2 } \big) t_i t_j \bigg] x^{1/2 }.
\end{gather*}
\end{Proposition}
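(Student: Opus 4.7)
The plan is to argue exactly as in the two preceding propositions: expand $A^{\langle 2\rangle} x^{1/2}$ using formula~(\ref{eq:A2xmu}) and verify that the assumption $h_1+h_2+h_3+h_4-l_1-l_2-l_3-l_4+2=0$ makes each of the four ``off-diagonal'' coefficients $d^{\langle 2\rangle,++}(1/2)$, $d^{\langle 2\rangle,+}(1/2)$, $d^{\langle 2\rangle,-}(1/2)$, $d^{\langle 2\rangle,--}(1/2)$ vanish, while $d^{\langle 2\rangle,0}(1/2)$ yields the claimed eigenvalue.

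First I would handle the two easy vanishings that do not use the assumption at all. Plugging $\mu=1/2$ into $d^{\langle 2\rangle,++}$ gives $q^{-1/2}+q^{1/2}-q^{1/2}-q^{-1/2}=0$, and plugging $\mu=1/2$ into $d^{\langle 2\rangle,+}$ gives $q^{h_i+1/2-\mu}=q^{h_i}$ and $q^{l_i-1/2+\mu}=q^{l_i}$, so each summand cancels term-by-term. These are exactly the cancellations that forced $1/2$ to be an exponent at $x=\infty$ with apparent singularity, as already observed in Section~\ref{sec:A2}.

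Next I would use the hypothesis to kill the two ``lower'' coefficients. Set $H=h_1+h_2+h_3+h_4$ and $L=l_1+l_2+l_3+l_4$, so the assumption reads $L=H+2$, which in particular gives $(H+L)/2=L-1=H+1$. For $d^{\langle 2\rangle,--}(1/2)$ the bracket becomes $q^{H+3/2}+q^{L-3/2}-q^{(H+L)/2}(q^{1/2}+q^{-1/2})$; substituting $H+3/2=L-1/2$ and $(H+L)/2=L-1$, this is $q^{L-1/2}+q^{L-3/2}-q^{L-1}(q^{1/2}+q^{-1/2})=0$. For $d^{\langle 2\rangle,-}(1/2)$ the $i$-th summand inside the sum equals
\begin{gather*}
q^{(H+L)/2}\bigl(q^{-h_i}+q^{-l_i}\bigr)-q^{H+1-h_i}-q^{L-1-l_i},
\end{gather*}
and since $(H+L)/2=H+1=L-1$, this vanishes for every $i$. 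Thus $d^{\langle 2\rangle,-}(1/2)=0$.

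Finally, what remains is the diagonal term. Substituting $\mu=1/2$ directly into $d^{\langle 2\rangle,0}$ gives
\begin{gather*}
d^{\langle 2\rangle,0}(1/2)=\sum_{1\le i<j\le 4}\bigl(q^{h_i+h_j+1/2}+q^{l_i+l_j-1/2}\bigr)t_i t_j,
\end{gather*}
which is exactly the stated eigenvalue. Combining the four vanishings with this identification finishes the proof. There is no real obstacle here: the argument is a matter of careful exponent bookkeeping, and the hypothesis is tailored precisely so that $H+2=L$ collapses the three different exponents in $d^{\langle 2\rangle,--}$ and $d^{\langle 2\rangle,-}$ to a common value.
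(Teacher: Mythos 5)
Your proof is correct and takes essentially the same route the paper intends: the proposition is the $n=0$ case of the preceding invariant-subspace proposition, and your checks that $d^{\langle 2\rangle,++}(1/2)=d^{\langle 2\rangle,+}(1/2)=d^{\langle 2\rangle,-}(1/2)=d^{\langle 2\rangle,--}(1/2)=0$ under $L=H+2$ reproduce exactly the vanishings used there, with $d^{\langle 2\rangle,0}(1/2)$ giving the stated eigenvalue. (You also silently read the exponent of $d^{\langle 2\rangle,--}$ as $(h_1+\cdots+h_4+l_1+\cdots+l_4)/2$, correcting the paper's evident typo, which is the right interpretation.)
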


\subsection*{Acknowledgements}
The author is grateful to Simon Ruijsenaars for valuable comments and fruitful discussions. He thanks to the referees and the editor for valuable comments. He is supported by JSPS KAKENHI Grant Number JP26400122.

\pdfbookmark[1]{References}{ref}
\LastPageEnding

\end{document}